\setlist[enumerate,1]{label=(\roman*)}
\numberwithin{equation}{section}
\declaretheoremstyle[
  shaded={bgcolor=\thmcolor}
]{plain}
\declaretheoremstyle[
  headfont=\normalfont\bfseries,
  bodyfont=\normalfont,
  shaded={bgcolor=\defcolor}
]{noital}
\declaretheoremstyle[
  headfont=\normalfont\bfseries,
  bodyfont=\normalfont,
]{noital}
\declaretheorem[style=plain,numberwithin=section,name=Theorem]{theorem}
\declaretheorem[style=plain,sibling=theorem,name=Lemma]{lemma}
\declaretheorem[style=plain,sibling=theorem,name=Conjecture]{conjecture}
\declaretheorem[style=plain,sibling=theorem,name=Claim]{claim}
\declaretheorem[style=plain,sibling=theorem,name=Fact]{fact}
\declaretheorem[style=plain,numbered=no,name=Theorem]{theorem-n}
\declaretheorem[style=plain,numbered=no,name=Proposition]{proposition-n}
\declaretheorem[style=plain,numbered=no,name=Lemma]{lemma-n}
\declaretheorem[style=plain,numbered=no,name=Corollary]{corollary-n}
\declaretheorem[style=plain,numbered=no,name=Conjecture]{conjecture-n}
\declaretheorem[style=plain,numbered=no,name=Claim]{claim-n}
\declaretheorem[style=plain,numbered=no,name=Fact]{fact-n}
\declaretheorem[style=plain,numbered=no,name=Open Problem]{openproblem-n}
\declaretheorem[style=plain,numbered=no,name=Question]{question-n}
\declaretheorem[style=plain,numbered=no,name=Observation]{observation-n}
\declaretheorem[style=noital,sibling=theorem,name=Remark]{remark}
\declaretheorem[style=noital,sibling=theorem,name=Definition]{definition}
\declaretheorem[style=noital,numbered=no,name=Remark]{remark-n}
\declaretheorem[style=noital,numbered=no,name=Definition]{definition-n}
\declaretheorem[style=noital,numbered=no,name=Construction]{construction-n}
\declaretheorem[style=noital,numbered=no,name=Example]{example-n}
\newcommand{\defined}{\mathrel{\coloneqq}}
\DeclarePairedDelimiter{\p}{\lparen}{\rparen}
\newcommand{\st}{\mathbin{\colon}}
\DeclarePairedDelimiter{\set}{\lbrace}{\rbrace}
\newcommand{\emptyset}{\varnothing}
\DeclarePairedDelimiter{\card}{\lvert}{\rvert}
\newcommand{\union}{\mathbin{\cup}}
\DeclarePairedDelimiter{\floor}{\lfloor}{\rfloor}
\DeclarePairedDelimiter{\ceil}{\lceil}{\rceil}
\newcommand{\mod}[1]{\ (\mathrm{mod}\ #1)}
\DeclarePairedDelimiterX{\abs}[1]
  {\lvert}{\rvert}{\ifblank{#1}{\,\cdot\,}{#1}}
\DeclarePairedDelimiterX{\norm}[1]
  {\lVert}{\rVert}{\ifblank{#1}{\,\cdot\,}{#1}}
\DeclarePairedDelimiterX{\inner}[2]
  {\langle}{\rangle}{\ifblank{#1}{\,\cdot\,}{#1},\ifblank{#2}{\,\cdot\,}{#2}}
\DeclareMathDelimiter{\given}
  {\mathbin}{symbols}{"6A}{largesymbols}{"0C}
\DeclareMathOperator{\Prob}{\mathbb{P}}
\DeclarePairedDelimiterXPP{\prob}[1]
  {\Prob}{\lparen}{\rparen}{}
  {\renewcommand{\given}{\nonscript\;\delimsize\vert\nonscript\;\mathopen{}}#1}
\DeclareMathOperator{\Expec}{\mathbb{E}}
\DeclarePairedDelimiterXPP{\expec}[1]
  {\Expec}{\lparen}{\rparen}{}
  {\renewcommand{\given}{\nonscript\;\delimsize\vert\nonscript\;\mathopen{}}#1}
\DeclareMathOperator{\Var}{Var}
\DeclarePairedDelimiterXPP{\var}[1]
  {\Var}{\lparen}{\rparen}{}
  {\renewcommand{\given}{\nonscript\;\delimsize\vert\nonscript\;\mathopen{}}#1}
\DeclareMathOperator{\Cov}{Cov}
\DeclarePairedDelimiterXPP{\cov}[2]
  {\Cov}{\lparen}{\rparen}{}{#1,#2}
\newcommand{\eps}{\varepsilon}
\newcommand{\sseq}{\subseteq}
\let\l\relax
\newcommand{\l}{\ell}
\newcommand{\RR}{\mathbb{R}}
\newcommand{\cA}{\mathcal{A}}
\DeclareMathOperator{\Conv}{Conv}
\begin{document}

\title{Reverse Littlewood--Offord problems with parity conditions}

\author{Lawrence Hollom}
\address{Department of Pure Mathematics and Mathematical Statistics (DPMMS), University of Cambridge, Wilberforce Road, Cambridge, CB3 0WA, United Kingdom}
\email{lh569@cam.ac.uk}

\author{Gregory B. Sorkin}
\address{Department of Mathematics, The London School of Economics and Political Science, Houghton Street, WC2A 2AE, United Kingdom}
\email{g.b.sorkin@lse.ac.uk}

\begin{abstract}
  We consider the probability that the random signed sum $\xi_1 v_1 + \dotsb + \xi_n v_n$ lies within a given distance $r$ of the origin, where $v_1,\dotsc,v_n \in \mathbb{R}^d$ are fixed unit vectors and $\xi_1,\dotsc,\xi_n$ are independently and uniformly distributed on $\{-1,+1\}$.
  In particular, our results demonstrate that, for certain values of $r$, the infimum of this probability is very sensitive to the parity of $n$.

  We prove that, for any $d\geq 3$, there is some $\varepsilon = \varepsilon(d) > 0$ such that for any $n \not\equiv d \mod 2$ and unit vectors $v_1,\dotsc,v_n\in \mathbb{R}^d$, there are signs $\eta_1,\dotsc,\eta_n \in \{-1,+1\}$ such that $\|\sum_{i=1}^n \eta_i v_i\| \leq \sqrt{d - \varepsilon}$, and so $\mathbb{P}(\| \xi_1 v_1 + \dotsb + \xi_n v_n \| \leq \sqrt{d-\varepsilon}) > 0$.
  This is in contrast to the case of $n\equiv d \mod 2$, wherein the above probability can be zero.
  More is known if $d=2$ and $n$ is odd, and in this case  we present a construction demonstrating that $\mathbb{P}(\|\xi_1 v_1 + \dotsb + \xi_n v_n\| \leq 1)$ can decay exponentially as $n$ increases.
\end{abstract}

\maketitle

%----------------------------------------------------------%

\section{Introduction}
\label{sec:intro}

The problems that we consider here can be traced back to the 1943 paper of Littlewood and Offord \cite{Littlewood1943-ax}, who considered the probability that a signed sum of complex numbers of unit norm lies within an open ball of unit radius.
This research has since developed into Littlewood--Offord theory, in which the object of interest is the random signed sum $\xi_1 v_1 + \dotsb + \xi_n v_n$, where the $v_i$ are fixed vectors, and the $\xi_i$ are independent Rademacher random variables, i.e., uniformly distributed on $\set{-1, +1}$.
In particular, the key questions concern the probability that this sum falls inside some given set $S$ (typically a zero-centred ball of some radius).
Our concern here is sometimes just whether the probability is nonzero: whether for every set of $v_i$ there exist signs $\eta_1,\dotsc,\eta_n \in\set{-1,+1}$ such that 
$\sum_{i=1}^n \eta_i v_i \in S$.
Throughout, we will use the variables $\xi_i$ for independent, uniformly random signs in $\set{-1,+1}$, and $\eta_i$ for deterministic signs.

One particular line of enquiry starts with the following 1945 conjecture of Erd\H{o}s \cite{Erdos1945-fu}.

\begin{conjecture}[Erd\H{o}s]
\label{conj:erdos}
    There is a constant $c$ such that, for any integer $n$ and any unit vectors $v_1,\dotsc,v_n\in \RR^2$, if $\xi_1,\dotsc,\xi_n$ are distributed independently and uniformly at random on $\set{-1,+1}$, then
    \begin{equation*}
        \prob{\norm{\xi_1 v_1 + \dotsb + \xi_n v_n} \leq 1} \geq \frac{c}{n}.
    \end{equation*}
\end{conjecture}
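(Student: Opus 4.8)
We are not able to prove \Cref{conj:erdos}, but here is the line of attack we would pursue. The plan is to show that at least $c\,2^n/n$ of the sign vectors $\eta \in \set{-1,+1}^n$ satisfy $\norm{\sum_i \eta_i v_i} \le 1$, which is exactly the asserted probability bound. The guiding heuristic is that $S \defined \sum_i \xi_i v_i$ has covariance matrix $\Sigma = \sum_i v_i v_i^{\top}$ with $\operatorname{tr}\Sigma = n$, so the law of $S$ is spread over a region of area $\Theta(n)$, and a local limit theorem would then give $\prob{\norm{S} \le 1} = \Theta(1/n)$, precisely the conjectured order. The entire difficulty is that $\Sigma$ may be badly anisotropic and the support of $S$ may be arithmetically structured, so one needs an anticoncentration estimate that is genuinely uniform over all configurations of unit vectors in $\RR^2$.

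The plan is to dichotomise on the spectrum of $\Sigma$. Write $\gl_1 \ge \gl_2 \ge 0$ for its eigenvalues, so $\gl_1 + \gl_2 = n$, and fix a small constant $\gd$. If $\gl_2 < \gd n$, the $v_i$ cluster around a single axis $u$; projecting onto $u^\perp$ one sees that the transverse sum $\inner{S}{u^\perp}$ has variance $\gl_2 < \gd n$, and the problem should reduce — after restricting to a positive-probability event on which the transverse sum is $O(1)$, or via a two-scale estimate — to a one-dimensional anticoncentration bound, which is of the larger order $1/\sqrt n$. I expect this regime to cause no fundamental difficulty, though the bookkeeping near the threshold $\gl_2 \asymp n$ will be somewhat delicate. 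The substantive case is $\gl_2 \ge \gd n$, where $S$ is genuinely two-dimensional.

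In that case I would argue by Fourier inversion. Smoothing the indicator of the unit disc and applying a quantitative inversion formula of Esseen type, one expresses $\prob{\norm{S} \le 1}$, up to constant factors, as the integral over $t \in \RR^2$ of a non-negative smoothing kernel against the characteristic function $\varphi_S(t) = \prod_i \cos\inner{t}{v_i}$. On the ball $\norm{t} \lesssim n^{-1/2}$ one has $\varphi_S(t) \approx \exp\bigl(-\tfrac12 \inner{\Sigma t}{t}\bigr) = \Theta(1)$ because $\gl_2 \ge \gd n$, which already contributes $\Theta(1/n)$ to the integral; the point is to show this is not cancelled by the tail $\norm{t} \gg n^{-1/2}$. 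For this I would use a Halász-type dyadic decomposition, bounding $\int_{\norm{t} \asymp 2^{-k}} \abs{\varphi_S(t)}\,dt$ by means of the inequality $\abs{\cos x} \le \exp\bigl(-c\,\operatorname{dist}(x,\pi\ZZ)^2\bigr)$ together with an estimate, at each scale, of how many indices $i$ have $\inner{t}{v_i}$ far from $\pi\ZZ$.

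I expect this last step to be the main obstacle, and indeed it is essentially why \Cref{conj:erdos} remains open. For arbitrary unit vectors the characteristic function can decay slowly along certain rays and can develop large secondary peaks; this is already visible for configurations supported on two opposite directions (which are, however, degenerate and disposed of by the first case), and the genuinely hard instances are two-dimensional configurations that are only \emph{nearly} supported on a few arithmetically related directions. Bounding the tail integral uniformly over all such configurations seems to require either a fine structural classification of them or a purely combinatorial pairing argument that sidesteps the Fourier analysis entirely; absent such an input, the scheme above yields only the known partial results rather than the full $c/n$.
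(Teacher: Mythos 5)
The statement you are addressing is a \emph{conjecture}, not a theorem, and the paper does not prove it --- on the contrary, the paper's introduction explicitly points out that \Cref{conj:erdos} as stated is \emph{false}. For $n \eqmod 2 \mod 4$, take $n/2$ copies of $(1,0)$ and $n/2$ copies of $(0,1)$: every signed sum has both coordinates odd integers, hence norm at least $\sqrt{2}$, so $\prob{\norm{\sum_i\xi_i v_i}\le 1}=0$ identically and no bound of the form $c/n$ can hold. More seriously, \Cref{thm:construction} of this very paper (and, before it, the $O(n^{-3/2})$ construction cited from \cite{hollom2025double}) disproves the conjecture even after restricting to odd $n$: there are unit vectors in $\RR^2$ with $\prob{\norm{\sum_i\xi_i v_i}\le 1}=2^{-\floor{n/2}}$, which is exponentially rather than polynomially small. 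Your proposal treats the conjecture as open but plausible; the paper's actual contribution on this point is a counterexample, not a proof, and that is the first thing any treatment of the statement needs to acknowledge.

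It is also worth pinpointing exactly where your plan breaks, because the breakdown occurs in the case you dismiss as easy. Your dichotomy is on the second eigenvalue $\gl_2$ of $\Sigma=\sum_i v_iv_i^\top$, and you claim that when $\gl_2<\gd n$ the problem ``should reduce to a one-dimensional anticoncentration bound, which is of the larger order $1/\sqrt{n}$.'' But the construction of \Cref{thm:construction} lives precisely in this regime: with $v_i=(\cos\theta_i,\sin\theta_i)$ and $\theta_i=\arcsin(c^i)$, $c=1/20$, all vectors cluster near $(1,0)$ and the transverse variance $\gl_2\approx\sum_i\sin^2\theta_i=O(1)\ll \gd n$. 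The reason one-dimensional anticoncentration does not rescue you is that Erd\H{o}s--Littlewood--Offord bounds of order $1/\sqrt{n}$ require the one-dimensional summands to be of comparable magnitude; here the transverse projections $\inner{v_i}{u^\perp}=\sin\theta_i=c^i$ decay geometrically, so the transverse sum $\sum_i\xi_i\sin\theta_i$ has essentially no anticoncentration --- a single sign pattern (up to the paired structure of the construction) dominates, giving the $2^{-\floor{n/2}}$ in \Cref{thm:construction}. Conversely, your Halász/Esseen analysis in the $\gl_2\gtrsim n$ regime is reasonable as a heuristic, but since the statement is false, the scheme cannot close: any uniform bound must fail on degenerate spectra, and the degeneracy is not removable by ``bookkeeping near the threshold.'' If you want a true statement to aim at, the paper's corrected target is Beck's \Cref{thm:beck} with radius $\sqrt{d}$ (here $\sqrt{2}$), or, for odd $n$, the open problem of determining the correct polynomial-versus-exponential behaviour that \Cref{thm:construction} now constrains.
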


In the above conjecture, and throughout the paper, all norms are the Euclidean $\l_2$-norm.

\Cref{conj:erdos} can be seen to be incorrect as stated for even $n$ by taking $n/2$ copies of $(1,0)$ and $n/2$ copies of $(0,1)$.
However, one can ``fix'' the conjecture and instead ask for the probability that the norm of the signed sum is at most $\sqrt{2}$.
This observation was attributed to Erd\H{o}s, S\'{a}rk\"{o}zy, and Szemer\'{e}di by Beck~\cite{Beck1983-ef}, and later also made by Carnielli and Carolino \cite{Carnielli2011-mq}.
Generalising (the corrected version of) \Cref{conj:erdos}, in 1983 Beck \cite{Beck1983-ef} proved the following theorem.

\begin{theorem}[Beck]
\label{thm:beck}
    For any $d\geq 1$, there is a constant $c_d > 0$ such that the following holds.
    If $v_1,\dotsc, v_n \in \RR^d$ have $\norm{v_i} \leq 1$ for each $1\leq i \leq n$, and if $\xi_1,\dotsc,\xi_n$ are independent Rademacher random variables, then
    \begin{equation*}
        \prob{\norm{\xi_1 v_1 + \dotsb + \xi_n v_n} \leq \sqrt{d}} \geq \frac{c_d}{n^{d/2}}.
    \end{equation*}
\end{theorem}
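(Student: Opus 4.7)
I would attempt a second-moment-and-pigeonhole approach, standard for inverse Littlewood--Offord bounds. Writing $S_n = \sum_i \xi_i v_i$ and $f\colon \set{-1,+1}^n \to \RR^d$, $f(\eta) = \sum_i \eta_i v_i$, the first step is the bound $\Expec \norm{S_n}^2 = \sum_i \norm{v_i}^2 \leq n$, which by Markov's inequality produces at least $\tfrac{3}{4} \cdot 2^n$ sign sequences $\eta$ with $f(\eta)$ in the ball $B_R$ of radius $R = 2\sqrt{n}$.

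Next I would employ an averaging argument to find a dense concentration. Setting $r = \sqrt{d}/2$ and $N(x) = \#\set{\eta : \norm{f(\eta)-x} \leq r}$, every good $\eta$ contributes a full $r$-ball (contained in $B_{R+r}$) to the integral of $N$, giving
\begin{equation*}
  \int_{B_{R+r}} N(x)\,\mathrm{d}x \;\geq\; \tfrac{3}{4} \cdot 2^n \cdot \mathrm{vol}(B_r).
\end{equation*}
Dividing by $\mathrm{vol}(B_{R+r}) = O(n^{d/2})$ yields some $x^* \in B_{R+r}$ with $N(x^*) \geq c_d \cdot 2^n/n^{d/2}$.

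The main obstacle is to transfer this concentration from $x^*$ to the origin. If $\norm{x^*} \leq r$ then $B(x^*,r) \subseteq B_{\sqrt{d}}$ and the desired bound follows directly; this is the easy case. In the hard case, $\norm{x^*}$ is large, and I would exploit the symmetry $f(-\eta) = -f(\eta)$, which gives an equally dense cluster $-A^*$ about $-x^*$, where $A^* \defined \set{\eta : f(\eta) \in B(x^*,r)}$. For $\eta \in A^*$ and $\eta' \in -A^*$, the coordinate-wise mean $(\eta+\eta')/2 \in \set{-1,0,+1}^n$ has sum $(f(\eta)+f(\eta'))/2$ of norm at most $r$, producing an abundance of ternary vectors with small image. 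The delicate final step---upgrading these ternary vectors to genuine sign vectors in $\set{-1,+1}^n$---is the principal technical challenge: one would like to pair two such ternary vectors whose zero-sets partition $\set{1,\dotsc,n}$, so that their sum lies in $\set{-1,+1}^n$ and has norm at most $2r = \sqrt{d}$. Quantitatively guaranteeing enough compatible pairs to match the target $c_d/n^{d/2}$ is where I expect the bulk of the work to lie, probably requiring either an iterative refinement of the pigeonhole step or a careful double-counting argument. An alternative Fourier-analytic route via the characteristic function $\phi(t) = \prod_j \cos(t\cdot v_j)$ is also attractive, since near the origin $\phi$ is approximately $\exp(-\tfrac{1}{2} t^{\top} M t)$ with $M = \sum_i v_i v_i^\top$ and $\mathrm{tr}(M) \leq n$, yielding by AM--GM a Gaussian contribution of order $\det(M)^{-1/2} \geq (d/n)^{d/2}$; the corresponding difficulty there is controlling the oscillatory tail of $\phi$.
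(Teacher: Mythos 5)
This theorem is quoted from Beck \cite{Beck1983-ef}; the paper does not prove it, so there is no in-paper argument to compare against. What the paper does import from Beck's proof are \Cref{lem:basic} (the $d$-approximating property: for unit-bounded vectors in $\RR^d$, every point of the zonotope $Z(V)$ lies within $\sqrt d$ of a vertex of $S(V)$) and \Cref{lem:beck-elimination} (the elimination argument), and it is exactly this apparatus that your sketch is missing.

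Your outline --- second moment plus Markov to place at least $\tfrac34\cdot 2^n$ sign sums in $B_{2\sqrt n}$, averaging over $B_{R+r}$ to find a point $x^*$ with at least $c_d\, 2^n/n^{d/2}$ sign sums in $B(x^*,r)$, and the symmetry $f(-\eta)=-f(\eta)$ to get a matching cluster near $-x^*$ --- is correct as far as it goes and is the standard opening. The genuine gap is the one you flag yourself: converting the resulting abundance of ternary vectors $\mu\in\set{-1,0,+1}^n$ with $\norm{f(\mu)}\le r$ back into $\pm 1$ vectors with $\norm{f}\le\sqrt d$. The specific device you propose --- pairing two ternary vectors $\mu,\mu'$ whose zero-sets exactly partition $[n]$, so that $\mu+\mu'\in\set{-1,+1}^n$ --- is not workable as stated. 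These ternary vectors arise as midpoints of sign vectors inside a single small cluster (or its antipode), so their supports are symmetric differences within that cluster; demanding two such supports be exactly complementary is an extremely rigid constraint (with a fixed base point $\eta^0$ it forces $\eta_1=-\eta_2$), and nothing in the pigeonhole supplies even a single compatible pair, let alone $\Theta(2^n/n^{d/2})$ of them with bounded overcounting. The missing ingredient is a \emph{deterministic} rounding step acting on one ternary vector at a time: that is precisely the role of \Cref{lem:basic} together with the elimination device of \Cref{lem:beck-elimination}, which allows one to complete $\mu$ to a full sign vector while fixing the coordinates on which $\mu$ is already $\pm1$, after which a counting argument controls the fibres of the completion map. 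Without such a tool the argument does not close; having many ternary vectors with small image is by itself not enough. The Fourier alternative you mention faces the analogous difficulty in a different guise, namely controlling $\prod_j\cos(t\cdot v_j)$ away from $t=0$, and that control is where the content of the theorem lives.
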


More recently, He, Ju\v{s}kevi\v{c}ius, Narayanan, and Spiro~\cite{He2024-cp} rediscovered Beck's result for $d=2$, and noted that the parity of $n$ seems to play an important role. 
In particular, they conjectured that Erd\H{o}s' conjecture should hold if one conditions on $n$ being odd.
Indeed, a result of Swanepoel~\cite[Theorem A]{Swanepoel2000-ha}, later reproved by B\'{a}r\'{a}ny, Ginzburg and Grinberg~\cite[Theorem 1]{Barany2013-vn}, implies that $\prob{\norm{\xi_1 v_1 + \dotsb \xi_n v_n} \leq 1}$ is strictly positive when $n$ is odd.
However, in \cite{hollom2025double} this conjecture was disproved, by means of constructing vectors $v_1,\dotsc,v_n$ with $\prob{\norm{\xi_1 v_1 + \dotsb \xi_n v_n} \leq 1} = O(n^{-3/2})$.
Moreover, the problem of determining the minimum value taken by the above probability for odd $n$ was left open.
Here we show that this bound can in fact be exponentially small.

\begin{theorem}
\label{thm:construction}
    Fix $c=1/20$, and an odd integer $n$.
    Define $v_n = (1,0)$ and, for $1\leq i \leq \floor{n/2}$, set $v_{2i-1} = v_{2i} = (\cos \theta_i, \sin \theta_i)$, where $\theta_i = \arcsin c^i$.
    Then, where $\xi_1,\dotsc,\xi_n$ are independent Rademacher random variables, 
    \begin{equation*}
        \prob{\norm{\xi_1 v_1 + \dotsb + \xi_n v_n} \leq 1} = 2^{-\floor{n/2}}.
    \end{equation*}
\end{theorem}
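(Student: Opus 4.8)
The plan is to show that the signed sum $\xi_1 v_1 + \dotsb + \xi_n v_n$ lands in the closed unit disc precisely when, for each pair $\{2i-1,2i\}$, the two signs $\xi_{2i-1},\xi_{2i}$ are opposite; since there are $\floor{n/2}$ independent pairs each contributing probability $1/2$ to this event, and these choices determine the outcome, the probability will be $2^{-\floor{n/2}}$. Concretely, I would write $S = \sum_{i=1}^{n} \xi_i v_i$ and split $S = \xi_n v_n + \sum_{i=1}^{\floor{n/2}} (\xi_{2i-1}+\xi_{2i}) v_i'$, where $v_i' = (\cos\theta_i,\sin\theta_i)$. If $\xi_{2i-1} = -\xi_{2i}$ for every $i$, then $S = \xi_n v_n = (\pm 1, 0)$, which has norm $1 \le 1$; so the event ``all pairs cancel'' is contained in $\{\norm{S}\le 1\}$, giving the lower bound $\prob{\norm{S}\le 1} \ge 2^{-\floor{n/2}}$.

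For the reverse inequality I need to show that if some pair fails to cancel, then $\norm{S} > 1$. Suppose $\xi_{2i-1} = \xi_{2i}$ for at least one index $i$; let $j$ be the largest such index. Then the contribution of pair $j$ is $\pm 2 v_j'$, and I would like to argue that the ``tail'' built from $v_n$ together with all smaller-index pairs cannot bring the sum back inside the disc. Write $S = \varepsilon\, 2 v_j' + w$, where $\varepsilon\in\{-1,+1\}$ and $w$ collects $\xi_n v_n$, the pairs with index $>j$ (which all cancel, by maximality of $j$, hence contribute $0$), and the pairs with index $<j$ (each contributing $0$ or $\pm 2 v_i'$). Thus $w = \xi_n v_n + \sum_{i<j} c_i\, 2 v_i'$ with $c_i\in\{-1,0,+1\}$. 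The key estimate is that $\norm{w}$ is small compared to the $y$-coordinate $2\sin\theta_j = 2c^{j}$ of the contribution $2 v_j'$: the $y$-coordinate of $w$ has absolute value at most $\sum_{i<j} 2\sin\theta_i = \sum_{i<j} 2 c^{i} < \tfrac{2c^{j}}{1-c}\cdot\tfrac{1-c}{c}\cdot c = \tfrac{2c}{1-c}\, c^{j-1}$; more usefully, $\sum_{i=1}^{j-1} 2c^i = 2c\frac{1-c^{j-1}}{1-c} < \frac{2c^{j}}{c}\cdot\frac{c}{1-c}$, and since $c=1/20$ this is far less than $c^{j}$ itself when compared against the $y$-displacement. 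I would make this precise by bounding the $y$-coordinate of $S$ from below: it equals $2\varepsilon c^{j} + (\text{$y$-coord of }w)$, and $|\text{$y$-coord of }w| \le \sum_{i=1}^{j-1} 2 c^{i} = \frac{2c(1-c^{j-1})}{1-c} < \frac{2c^{j}}{1-c}\cdot\frac{1}{c}\cdot c \cdot \frac{1}{1}$. Let me instead just record the clean bound $\sum_{i=1}^{j-1} 2c^{i} < \frac{2c^{j}}{c}\cdot\frac{c}{1-c} = \frac{2c^{j}}{1-c}$; hmm, that is larger than $2c^j$. So the $y$-coordinate alone does not immediately finish the job, and I would instead compare $\norm{S}$ against the fact that $\norm{\xi_n v_n} = 1$ exactly, using the near-alignment of all the $v_i'$ with $v_n$.

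The cleaner route, which I expect to be the main technical step, is this: all vectors $v_i'$ and $v_n$ lie in a narrow cone around the $x$-axis, and the ``forbidden'' contributions $2v_j'$ are long (length $2$) compared to unit length. Write $S = \xi_n v_n + 2\sum_i c_i v_i'$ with not all $c_i$ zero, and let $j$ be the largest index with $c_j\ne 0$. Project onto the line through $v_j'$: since every $v_i'$ and $v_n$ has inner product with $v_j'$ at least $\cos(\theta_i - \theta_j) \ge$ (something close to $1$, as all $\theta_i \le \arcsin c < \pi/6$), the projection of $S$ onto the unit vector $v_j'$ has absolute value at least $2|c_j| - 1 - 2\sum_{i<j}|c_i| \cdot 1 \ge 2 - 1 - 2\sum_{i=1}^{j-1} 1$, which is not obviously positive either. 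So the genuinely correct argument must exploit the geometric decay $\theta_i = \arcsin c^i$ much more carefully — likely by an inductive/greedy analysis from the top index downward, showing the partial sums stay outside the disc — and \emph{that induction is the crux of the proof}. I would set it up as: order the pairs by decreasing index, process $v_n$ last, maintain the invariant that after incorporating a non-cancelling pair the current partial sum has norm $> 1 + (\text{gap})$ with the gap shrinking geometrically but always dominating the remaining total length $\sum_{i} 2c^{i} + 1$ of what is left to add, and conclude $\norm{S} > 1$; the verification that $c = 1/20$ makes all these geometric-series comparisons go through is the routine part I would defer.
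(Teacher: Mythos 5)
Your lower-bound direction (if every pair cancels, $S=\pm v_n$ has norm $1$) is correct and matches the paper. But the upper-bound direction — the claim that any non-cancelling pair forces $\norm{S}>1$ — is where your argument genuinely breaks down, and the fixes you sketch would not obviously work.

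Two specific gaps. First, you anchor on $j$, the \emph{largest} non-cancelling index, and then notice yourself that the $y$-coordinate estimate fails: $\sum_{i<j} 2c^i$ dominates $2c^j$. The paper anchors on $k$, the \emph{smallest} non-cancelling index, and then the geometric series runs the other way: the remaining $y$-contributions from indices $>k$ total at most $2c^{k+1}/(1-c)$, which is a small fraction of $2c^k$, so $\abs{y}\geq 2c^k\cdot\tfrac{18}{19}$. Choosing the smallest index is essential. Second, your proposal has no handle on the $x$-coordinate. The paper's key observation is that the $x$-coordinate equals $1+2\sum_{i\in I}\pm(1-\Delta_i)$ where $\Delta_i\leq 0.51 c^{2i}$ is tiny; ignoring the $\Delta_i$'s this is an odd integer, hence of absolute value $\geq 1$, and the corrections cost only $2\sum_{i\geq k}\Delta_i = O(c^{2k})$. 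So $\abs{x}\geq 1 - 1.03\,c^{2k}$, and combining, $x^2+y^2 \geq (1-1.03\,c^{2k})^2 + (\tfrac{36}{19}c^k)^2 > 1$ because the $y^2$ gain of order $c^{2k}$ outweighs the $x^2$ loss of order $c^{2k}$. Without this parity observation, the inductive ``process pairs from the top down maintaining norm $>1+\text{gap}$'' invariant you propose fails already at the second step: after one non-cancelling pair $\pm 2v_j'$ the partial sum has norm near $2$, and a subsequent $\mp 2v_i'$ can bring it near $0$. The argument must be global (odd-integer $x$-coordinate) rather than a one-step-at-a-time norm invariant.
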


We also consider the problem in higher dimensions.
In particular, the following question was raised in \cite{hollom2025double} as a natural extension of the results in two dimensions.

\begin{restatable}[\cite{hollom2025double}]{question}{balanceQuestionMain}
\label{q:balancing}
    Let $v_1, \dotsc, v_n \in \RR^d$ be unit vectors with $n \not\equiv d \mod{2}$.
    Is it always the case that there are signs $\eta_1, \dotsc, \eta_{n} \in \set{-1,1}$ with
    \begin{equation*}
        \norm[\Big]{\sum_{i=1}^{n} \eta_i v_i} \leq \sqrt{d-1} \;\; ?
    \end{equation*}
\end{restatable}

While we cannot get all the way to $\sqrt{d-1}$, we can prove the following theorem.

\begin{theorem}
\label{thm:balancing}
    For every integer $d$ there is $\eps = \eps(d) > 0$ such that, for any sequence $v_1,\dotsc,v_n \in \RR^d$ of unit vectors with $n \not\equiv d\mod 2$, there are signs $\eta_1,\dotsc,\eta_n \in\set{-1,+1}$ such that 
    \begin{equation}
    \label{eq:balancing}
        \norm[\Big]{\sum_{i=1}^n \eta_i v_i} \leq \sqrt{d-\eps}.
    \end{equation}
    In particular, we may take $\eps = 2^{-100} d^{-80}$.
\end{theorem}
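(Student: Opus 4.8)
The plan is to argue by contradiction. Suppose there are unit vectors $v_1,\dotsc,v_n\in\RR^d$ with $n\not\equiv d\pmod 2$ for which every signing $\eta$ has $\norm{\sum_i\eta_iv_i}^2>d-\eps$, where $\eps=2^{-100}d^{-80}$. Two easy reductions come first. If $n\le d-1$, the standard prefix argument (choosing each sign so as not to increase the running squared norm by more than one) yields a signing with $\norm{\sum_i\eta_iv_i}^2\le n\le d-1<d-\eps$, a contradiction; so $n\ge d+1$. If the $v_i$ span a proper subspace $W$, then $\dim W\le d-1$, and applying \Cref{thm:beck} inside $W$ gives a signing with squared norm $\le\dim W\le d-1<d-\eps$; so we may assume the $v_i$ span $\RR^d$.

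Second, pass to an extremal signing. By \Cref{thm:beck} the minimum of $\norm{\sum_i\eta_iv_i}^2$ over signings is at most $d$, so fix $\eta$ achieving this minimum $M^2$, with $d-\eps<M^2\le d$; write $s=\sum_i\eta_iv_i$ and $v_i'=\eta_iv_i$. Minimality means that flipping any subset $T$ of signs does not decrease the squared norm, i.e. $\norm{s-2\sum_{i\in T}v_i'}^2\ge M^2$ for all $T$; this rearranges to the clean statement that $\inner{\sum_{i\in T}v_i'}{\sum_{i\notin T}v_i'}\le 0$ for every bipartition $[n]=T\sqcup([n]\setminus T)$ — every cut in the Gram graph of the $v_i'$ is nonpositive. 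Taking $T$ a singleton gives $a_i\defined\inner{s}{v_i'}\le\norm{v_i'}^2=1$; summing over singletons, $\sum_ia_i=M^2$, so the slacks $\delta_i\defined 1-a_i\ge 0$ satisfy $\sum_i\delta_i=n-M^2<n-d+\eps$.

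The core of the argument is a dimension reduction onto $s^{\perp}\cong\RR^{d-1}$. Set $u_i\defined v_i'-(a_i/M^2)\,s$, so that $\sum_iu_i=0$, $\norm{u_i}^2=1-a_i^2/M^2\le 1$, and the $u_i$ span $s^{\perp}$. The bipartition inequalities together with the smallness of $\sum_i\delta_i$ should force all but a controlled number of indices — $\mathrm{poly}(d)$-many when $n$ is comparable to $d$ — to be "regular", i.e. to have $\delta_i$ small, hence $\norm{u_i}$ close to a fixed value and $a_i$ close to $1$. One then removes the bounded set of irregular indices by hand and applies the statement in dimension $d-1$ (or just \Cref{thm:beck}, which already suffices since $\sqrt{d-1}<\sqrt{d-\eps}$) to the $u_i$ over the regular indices; lifting the resulting signing back to $\RR^d$, and spending the few remaining irregular indices to cancel the component along $s$, would produce a signing of the $v_i$ with squared norm strictly below $d-\eps$, contradicting minimality of $M^2$. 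The parity hypothesis $n\not\equiv d$ is meant to be used precisely here, to arrange that the index set fed into the $(d-1)$-dimensional step has the parity that step requires; for $n\equiv d$ this breaks down, consistent with the orthonormal-frame configurations being genuinely extremal.

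The step I expect to be the main obstacle is the lifting: a signing that balances the projected vectors $u_i$ in $\RR^{d-1}$ carries no control on $\sum_i\eta_ia_i$, the component recovered along $s$, so one must balance both objectives simultaneously — a Beck-type statement with an additional linear constraint — while keeping the bounded set of irregular indices (whose $a_i$ may be as large as $\sqrt d$ in absolute value) available exactly to correct that component. Making this quantitative, together with the separate handling of the regime where $n$ is much larger than $d$ (where one instead exploits the large surplus in $\sum_i\delta_i$), is where the polynomial loss in $d$ and the crude constant $2^{-100}$ come from. A secondary point is that the whole scheme must be run for vectors of norm at most $1$ rather than exactly $1$, since the $u_i$ are only sub-unit, so this mild generalisation of \Cref{thm:balancing} should be set up from the outset.
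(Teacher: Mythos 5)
Your plan is a genuinely different strategy from the paper's (the paper proves a dichotomy lemma for $d+1$ vectors — either $(d-\eps)$-approximating or pairwise almost orthogonal/parallel — and then splits into an ``oblique pair exists'' case handled by a planar projection argument and an ``all clusters'' case where the parity hypothesis bounds the number of odd clusters by $d-1$). Your approach via an extremal signing and the cut inequalities $\inner{\sum_{i\in T}v_i'}{\sum_{i\notin T}v_i'}\le 0$ is a natural alternative and those inequalities are correctly derived, but as written the proposal has two gaps that are not cosmetic. First, the ``regularity'' step is unproved and, as stated, false: $\sum_i\delta_i<n-d+\eps$ controls only the total slack, and even when $n=d+1$ this budget of just over $1$ can sit almost entirely on one or two indices (e.g.\ two indices with $\delta_i\approx 1/2$ and the rest zero); nothing in the singleton or pair cut inequalities forces ``all but $\mathrm{poly}(d)$'' indices to have $\delta_i$ uniformly small, and you give no argument for the $n\gg d$ regime beyond saying one ``exploits the large surplus''. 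Second, and you flag this yourself, the lifting step is missing: balancing the projections $u_i\in s^\perp$ via Beck's lemma gives you $\sigma\in\set{-1,+1}^n$ with $\norm{\sum_i\sigma_iu_i}^2\le d-1$, but the recovered $s$-component $(\sum_i\sigma_ia_i)/M$ is completely uncontrolled (it can be of order $\sqrt{n}$), and you have only a bounded handful of ``irregular'' indices with which to correct it. This is not a small technical nuisance — it is exactly the coupled balancing problem that the paper avoids by never projecting along $s$, instead projecting onto the $2$-plane spanned by an oblique pair (Step 1(b)--(c)) where the stability estimate of \Cref{lem:stability} makes the error bookkeeping close.

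There is also a more structural mismatch with the parity hypothesis. In your scheme the only place parity could enter is in arranging the count of indices fed to the $(d-1)$-dimensional step, but you give no mechanism for this, and the induction base is unclear (you appeal to ``the statement in dimension $d-1$'', which would need $|\text{regular set}|\not\equiv d-1\pmod 2$, a condition you have not shown you can arrange). In the paper, by contrast, parity is used exactly once and transparently: after pairing within clusters, $n\not\equiv d$ forces at most $d-1$ odd clusters, whence \Cref{lem:basic} on the at most $d-1$ ``long'' leftovers gives squared norm $\le d-1$, and the short pair-differences cost only $O(\zeta d)$. If you want to pursue your route, the two concrete things to supply are (i) a proof, from the full family of cut inequalities and not just singletons, that the $\delta_i$ concentrate in the way you claim, and (ii) a Beck-type balancing lemma that simultaneously controls $\sum_i\sigma_iu_i$ and the scalar $\sum_i\sigma_ia_i$; without both, the contradiction at the end does not materialise.
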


We remark that, for $n\equiv d \mod 2$, there are choices of $v_1,\dotsc,v_n$ for which $\prob{\xi_1 v_1 + \dotsb + \xi_n v_n \leq \sqrt{d - \eps}} = 0$ for any $\eps > 0$.
Indeed, let $e_1,\dotsc,e_d$ be an orthonormal basis for $\RR^d$, and let $v_1,\dotsc,v_n$ consist of an odd number of copies of each $e_i$ (whence the parity condition on $n$).
We may thus see that \Cref{thm:balancing} demonstrates that the parity of $n$ is significant in any number of dimensions.

The value of $\eps$ given in \Cref{thm:balancing} is surely far from optimal.
Indeed, it is known that the upper bound of $\sqrt{d-1}$ from \Cref{q:balancing} holds in two dimensions \cite{He2024-cp}.
However, even proving this for the case of four vectors in three dimensions seems to be non-trivial.\footnote{Though, as this problem can be seen as a problem of 12 real variables, it can be checked (and has been checked) to be true by a suitable computer search.}

If an optimal upper bound on $\min_{\eta \in \set{-1,+1}^n} \norm[\big]{\sum_{i=1}^n \eta_i v_i}$ were to be discovered, the obvious follow-up question is to ask how many signed sums must lie at that distance or closer to the origin.
In particular, it would be of great interest as to whether there is a double-jump phase transition like that discovered in \cite{hollom2025double} (see \cite{hollom2025double} for more details on this phase transition).

\subsection{Paper outline}

We first state a few preliminary results in \Cref{sec:preliminaries}, which will find use in our proofs throughout the rest of the paper.
In \Cref{sec:construction} we then provide a proof of \Cref{thm:construction}.
The rest of the paper is dedicated to the proof of \Cref{thm:balancing}.
In \Cref{sec:balancing} we deduce \Cref{thm:balancing} from two technical results, \Cref{lem:dichotomy,lem:stability}.
\Cref{lem:dichotomy} gives a dichotomy between a sequence of vectors either giving good approximations or being highly structured.
\Cref{lem:stability} is a stability result concerning when a sequence of vectors may have no signed sums within distance $\sqrt{d-\delta}$ of 0.
In \Cref{sec:stability} we then prove \Cref{lem:dichotomy,lem:stability}, the former being deduced from the latter.
Finally, we discuss some open problems and directions for future research in \Cref{sec:conclusion}.

%----------------------------------------------------------%

\section{Preliminary results}
\label{sec:preliminaries}

We now present the results we will make use of throughout the rest of the paper which derive primarily from other sources.

\begin{definition}
    For a sequence $V=(v_1,\dotsc,v_n)$ of vectors, let 
    \begin{equation*}
        S(V)\defined \set{\sum_{i=1}^n \eta_i v_i \st \eta_i \in \set{-1, +1}}
    \end{equation*} 
    be the set of signed sums of the vectors $V$, and
    \begin{equation*}
        Z(V)\defined \set{\sum_{i=1}^n \lambda_i v_i \st \lambda_i \in [-1, +1]}
    \end{equation*} 
    the zonotope that is its continuous equivalent.
\end{definition}

We remark here that we will, with a slight abuse of notation, also consider a sequence $V$ of vectors as a multiset, and thus use notation such as $W\sseq V$ and $V \setminus W$, which is defined entirely as would be expected.

\begin{remark} \label{CSeqZ}
    For any sequence $V=(v_1,\dotsc,v_n)$ of vectors, 
    \begin{equation*}
      \Conv(S(V)) = Z(V) .
    \end{equation*}
\end{remark}

\begin{proof}
    It is clear that $\Conv(S(V)) \subseteq Z(V)$: in any combination in $\Conv(S(V))$, the coefficient of $v_i$ is a convex combination of individual coefficients $-1$ and $+1$, thus in $[-1,1]$.
    That $Z(V) \subseteq \Conv(S(V))$ can be shown by induction. First, $\lambda_1 v_1 + \lambda_2 v_2 + \dotsb$ is in $\Conv(S(V))$ if both $v_1 + \lambda_2 v_2 +\dotsb$ and $-v_1 + \lambda_2 v_2 +\dotsb$ are. Then for each of them replace $\lambda_2$ with $\pm 1$, and so on.
\end{proof}

\begin{definition} \label{def:approx}
    A sequence $V = (v_1,\dotsc,v_n)$ of vectors in $\RR^d$ is said to be \emph{$r$-approximating} for some $r > 0$ if, for every $\set{\lambda_i \in [-1,1] \st i \in [n]}$, there are $\set{\eta_i \in \set{-1, +1} \st i\in [n]}$ such that
    \begin{equation} \label{eq:approx}
        \norm[\Big]{\sum_{i=1}^n (\lambda_i + \eta_i) v_i}^2 \leq r.
    \end{equation}
    In other words, $V$ is $r$-approximating if every point in $Z(V)$ or, equivalently by \Cref{CSeqZ}, $\Conv(S(V))$, is within square-distance $r$ of some point in $S(V)$.
\end{definition}

The following result is a rephrasing of Lemma 2.2 of Beck \cite{Beck1983-ef}.

\begin{lemma}[Beck \cite{Beck1983-ef}]
\label{lem:basic}
    Any finite sequence of vectors, each of length at most 1 in $\RR^d$, is $d$-approximating.
\end{lemma}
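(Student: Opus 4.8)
The plan is to prove the statement by induction on $n$, the number of vectors, processing the $\lambda_i$ one coordinate at a time. Given target coefficients $\lambda_1,\dotsc,\lambda_n \in [-1,1]$, I want to show there is a choice of signs $\eta_1,\dotsc,\eta_n$ with $\norm{\sum_i (\lambda_i+\eta_i) v_i}^2 \le d$. The natural approach is a greedy one: set $\eta_n$ to minimise the eventual error, then recurse. Concretely, I would maintain a running ``residual'' vector and argue that one can always flip a sign so as to keep a suitable potential function under control.

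The cleanest way to run this is as follows. Write $w = \sum_{i=1}^n \lambda_i v_i$ for the target, so we want $\eta \in \set{-1,+1}^n$ with $\norm{\sum_i \eta_i v_i + w}^2 \le d$ after absorbing signs appropriately --- more precisely we want $\norm{\sum_i(\lambda_i + \eta_i)v_i}^2 \le d$. I process vectors one at a time. Suppose after choosing $\eta_n,\dotsc,\eta_{k+1}$ the partial residual is $r_k = \sum_{i>k}(\lambda_i+\eta_i)v_i + \sum_{i \le k}\lambda_i v_i$; note $r_n = w$ has $\norm{r_n} \le n$ a priori but that is not directly useful. Instead I track the potential $\Phi_k = \norm{r_k}^2 - \sum_{i \le k}\lambda_i^2\norm{v_i}^2 + (\text{something})$. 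The key algebraic fact is: for the replacement of $\lambda_k$ by a sign $\eta_k \in \set{\pm1}$, writing $u = r_{k} - \lambda_k v_k$ (the part not involving $v_k$), we have $r_{k-1} = u + \eta_k v_k$ and
\begin{equation*}
  \norm{u+v_k}^2 + \norm{u-v_k}^2 = 2\norm{u}^2 + 2\norm{v_k}^2,
\end{equation*}
so $\min_{\eta_k}\norm{u+\eta_k v_k}^2 \le \norm{u}^2 + \norm{v_k}^2 \le \norm{u}^2 + 1$. Meanwhile $\norm{u}^2 = \norm{r_k - \lambda_k v_k}^2 \le \norm{r_k}^2 + 2|\lambda_k|\,|\inner{r_k}{v_k}| + \lambda_k^2$; this is the direction where one has to be careful, since the cross term could be large. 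The fix (this is Beck's idea) is to instead choose $\eta_k$ to be the sign that moves *towards* the origin along $v_k$, i.e. pick $\eta_k$ so that $\inner{u}{\eta_k v_k} \le 0$ is replaced by the correct inequality $\inner{\lambda_k v_k}{\text{rest}}$; working it out, choosing $\eta_k = -\operatorname{sign}(\inner{u}{v_k})$ and using $\norm{r_{k-1}}^2 = \norm{u}^2 + 2\eta_k\inner{u}{v_k} + \norm{v_k}^2 \le \norm{u}^2 + 1$. Then one telescopes: each step adds at most $1$ to $\norm{\cdot}^2$ relative to a clean starting point, and one starts the induction from $k=0$ where the residual is $0$. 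Running the recursion from $k=0$ up to $k=n$ and bounding the accumulated increase by the dimension (rather than by $n$) is where the real content lies --- the point being that the $n$ unit-vector steps, when each is chosen to shrink the component along $v_k$, accumulate error at most $d$, not $n$, because the ``uncontrolled'' growth only happens in $d$ independent directions.

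The step I expect to be the main obstacle is exactly the bound that replaces the naive $n$ by $d$. The one-step estimate $\norm{r_{k-1}}^2 \le \norm{r_k}^2 + 1$ only gives $\norm{\sum_i(\lambda_i+\eta_i)v_i}^2 \le n$, which is useless. To get $d$ one must exploit that after all signs are fixed one can re-examine the final vector $x = \sum_i(\lambda_i+\eta_i)v_i$ and show $\norm{x}^2 \le d$ directly --- for instance, by a separate argument showing that if $\norm{x} > \sqrt d$ then some sign flip strictly decreases $\norm{x}$, combined with a compactness/extremality argument (take $\eta$ minimising $\norm{\sum_i(\lambda_i+\eta_i)v_i}$ over the finite sign set, then show a single flip argument forces the bound). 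Indeed the slick proof is: fix $\lambda$, choose $\eta$ minimising $f(\eta) = \norm{\sum_i(\lambda_i+\eta_i)v_i}^2$; at the minimiser, flipping $\eta_j$ to $-\eta_j$ cannot decrease $f$, which gives $\inner{x}{(\lambda_j+\eta_j)v_j} \ge \tfrac12\norm{(\lambda_j+\eta_j)v_j}^2 \ge 0$ for each $j$ (using $|\lambda_j+\eta_j| \le 2$ and $\norm{v_j}\le 1$... one needs the precise inequality here); summing over $j$ gives $\norm{x}^2 = \inner{x}{\sum_j(\lambda_j+\eta_j)v_j} \ge \tfrac12\sum_j(\lambda_j+\eta_j)^2\norm{v_j}^2$, which unfortunately bounds things the wrong way. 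The genuinely correct route, and the one I would commit to, is Beck's: bound $\norm{x}^2$ by decomposing via an orthonormal basis and controlling each coordinate's contribution by $1$, giving $d$ total; I would reconstruct that argument carefully, as getting the constant exactly $d$ (not $4d$ or $\tfrac{n}{2}$) is the delicate point.
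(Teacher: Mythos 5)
Your greedy/one-step analysis is sound in spirit: choosing the sign that does not worsen things, the squared norm increases by at most $\norm{v_k}^2\le1$ per vector, which telescopes to $\norm{\sum_i(\lambda_i+\eta_i)v_i}^2\le n$. (The clean way to see the one-step bound is that with $\mu=\tfrac{1-\lambda_k}{2}\in[0,1]$ one has $\mu\norm{u+(\lambda_k+1)v_k}^2+(1-\mu)\norm{u+(\lambda_k-1)v_k}^2=\norm{u}^2+(1-\lambda_k^2)\norm{v_k}^2\le\norm{u}^2+1$, so the minimum over the two signs is at most $\norm{u}^2+1$.) You also correctly flag that this only gives $n$, not $d$, and that this is the real content of the lemma.

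However, the fix you gesture at --- ``decomposing via an orthonormal basis and controlling each coordinate's contribution by $1$'' --- is not Beck's argument and does not obviously close the gap: the greedy sign choices do not decouple across coordinates, and nothing in the one-step estimate controls each coordinate separately. The missing idea is the \emph{elimination} step, which appears in this paper as \Cref{lem:beck-elimination} (and, in a closely related form, \Cref{lemma:covering}). Concretely: if $n>d$, the vectors $v_1,\dotsc,v_n$ are linearly dependent, say $\sum_i\beta_i v_i=0$ nontrivially. Then $\sum_i\lambda_i v_i=\sum_i(\lambda_i+\gamma\beta_i)v_i$ for every $\gamma$; scale $\gamma$ from $0$ until some coefficient $\lambda_j+\gamma\beta_j$ first hits $\pm1$, fix $\eta_j$ to that value (it contributes nothing to the error), and drop $v_j$. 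Repeating, you reach a subsequence of at most $d$ vectors representing the same target point, and only then does the telescoping bound give $d$ rather than $n$. Without this dimension reduction your proposal does not yield the claimed bound, so as written there is a genuine gap.
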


We also use the following lemma, which is in essence contained in the proof of Beck's Lemma~2.2 in \cite[pages 7--8]{Beck1983-ef}.

\begin{lemma}
\label{lem:beck-elimination}
    Let $V = (v_1,\dotsc,v_n)$ be a sequence of vectors in $\RR^d$, let $k$ be an integer, and let $W\sseq V$ satisfy $\card{W} \geq k + 1 \geq d + 1$.
    If, for all $Y\sseq W$ with $\card{Y} = k$, the set $(V\setminus W) \union Y$ is $r$-approximating, then $V$ is also $r$-approximating.
\end{lemma}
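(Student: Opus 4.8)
The plan is to reduce the $r$-approximating property of $V$ to that of the smaller sets $(V\setminus W)\cup Y$ by a "rounding in stages" argument on the weights $\lambda_i$ indexed by $W$. Given arbitrary target weights $\{\lambda_i\in[-1,1] : i\in[n]\}$, I want to produce signs $\eta_i\in\{-1,+1\}$ with $\|\sum_i(\lambda_i+\eta_i)v_i\|^2\le r$. Since $|W|\ge k+1$, write $|W|=k+m$ with $m\ge 1$. The idea is to peel off the vectors of $W$ one at a time, rounding the corresponding $\lambda$ to $\pm 1$ (which kills its contribution, as $\lambda_i+\eta_i=0$ when $\eta_i=-\lambda_i$ is not what we want — rather we round $\lambda_i$ to a sign and then that coordinate's residual is $\lambda_i+\eta_i$; but once $\lambda_i\in\{-1,+1\}$ we simply take $\eta_i=-\lambda_i$ and contribute $0$). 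More precisely, I iterate until exactly $k$ of the weights in $W$ remain fractional (or until all are resolved), at which point the remaining problem is an instance of "round the weights of $(V\setminus W)\cup Y$" for some $Y\subseteq W$ with $|Y|=k$, which is $r$-approximating by hypothesis.

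The mechanism for eliminating one fractional weight at a time is a Beck-style convexity/extreme-point argument: fix all weights except those of some subset of $W$, and consider the polytope $P$ of admissible completions. Actually the cleaner route, following Beck's elimination argument on pages 7--8, is this. Suppose at some stage more than $k$ weights among $W$ are strictly inside $(-1,1)$; call this set of fractional coordinates $F\subseteq W$, $|F|\ge k+1\ge d+1$. Consider the affine map $\RR^F\to\RR^d$ sending a tuple $(\mu_i)_{i\in F}$ to $\sum_{i\in F}\mu_i v_i$ plus the fixed contribution of the other coordinates. Its fibre through the current point is an affine subspace of dimension $\ge |F|-d\ge 1$, so we can move along it, keeping $\sum_i\lambda_i v_i$ (hence the zonotope point we are approximating) unchanged, until at least one coordinate of $F$ hits $\pm 1$. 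We then fix that coordinate at its new $\pm 1$ value (set its sign $\eta$ to cancel it) and repeat. Each step strictly decreases the number of fractional coordinates in $W$, so after at most $m$ steps we have at most $k$ fractional coordinates in $W$; the coordinates of $W$ that got pushed to $\pm 1$ are resolved with zero residual, and choosing $Y\subseteq W$ to be any $k$-subset containing the still-fractional ones reduces the residual rounding problem to rounding the weights of $(V\setminus W)\cup Y$, which succeeds within square-distance $r$ by hypothesis. Combining, $V$ is $r$-approximating.

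One has to be slightly careful that moving along the fibre does not change the point of $Z(V)$ we are trying to approximate: by construction $\sum_{i\in F}\mu_i v_i$ is held constant, and all weights outside $F$ are held constant, so $\sum_{i=1}^n\lambda_i v_i$ is unchanged throughout; only the representation changes. I also need the final residuals on the $W$-coordinates pushed to $\pm 1$ to be exactly $0$, which holds since for those we take $\eta_i=-\lambda_i\in\{-1,+1\}$. So the total residual vector is exactly the residual produced by the $(V\setminus W)\cup Y$ instance, and its square-norm is $\le r$.

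The main obstacle is purely bookkeeping: setting up the iteration so that the indices being rounded are tracked correctly, that the hypothesis is invoked on a subset of the right size $k$ (not $k-1$ or $k+1$), and that the parity/monotonicity of "number of fractional $W$-coordinates" is handled when a single fibre move happens to resolve several coordinates at once (which only helps). There is no real analytic difficulty — it is the same extreme-point/dimension-count that underlies Beck's Lemma 2.2 — so the write-up is a careful induction on $|W|-k$, with base case $|W|=k+1$ giving one elimination step followed by a direct application of the hypothesis.
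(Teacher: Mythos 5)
Your proof is correct and is essentially the same as the paper's own (sketched) argument for this lemma: both peel off fractional coordinates indexed by $W$ one at a time by finding a nontrivial linear dependency among at least $d+1$ of the corresponding vectors, moving along that kernel direction (keeping the zonotope point fixed) until a coordinate reaches $\pm 1$, fixing that coordinate's sign to get zero residual, and iterating until only $k$ fractional $W$-coordinates remain, whereupon the hypothesis on $(V\setminus W)\cup Y$ is applied. The only cosmetic difference is that you track the shrinking set $F$ of fractional coordinates whereas the paper tracks a shrinking index set $Y$ initialised to $W$; your remark that a single fibre move may resolve several coordinates simultaneously (only helping) is a correct and slightly more careful observation than the sketch makes explicit. (Note also that the paper's sketch contains a sign typo, ``setting $\eta_i=\lambda_i$'' where it should read $\eta_i=-\lambda_i$; your version has the correct sign.)
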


We will deduce \Cref{lem:beck-elimination} from \Cref{lemma:covering}, but, for completeness, we also provide a sketch of Beck's argument.

\begin{proof}[Proof sketch.]
If any $\lambda_i$ in \eqref{eq:approx} is $-1$ or $+1$, setting $\eta_i=\lambda_i$ and eliminating the $i$th variable from the system shows that the approximation for the original system is at least as good as that for the smaller one. 
Initialise $Y=W$.
Choose any $Y' \subseteq Y$ with $\card{Y'}=d+1$.
There is a \emph{nontrivial} solution to $\sum_{i \in Y'} \lambda'_i x_i = 0$. Starting small, scale this solution up until the first time some $\lambda_i+\lambda'_i \in \set{-1, +1}$.
Eliminate variable $i$ and update $Y$ to $Y\setminus \set{i}$.
Repeat until $\card{Y}=k$.
\end{proof}

We will consider \Cref{lem:beck-elimination} as a means of ``eliminating'' vectors: if we assume that some set $V$ is not $r$-approximating, and $W\sseq V$ contains at least $k+1$ elements for some $k\geq d$, then we may find a subset of $V$ which is also not $r$-approximating consisting of all of $V\setminus W$ and an (adversarially chosen) subset of $W$ of size $k$.
This allows us to pass from a large set which is not $r$-approximating to a smaller one, while preserving some of its structure.

It is possible that by more careful tracking of parameters in the proof more could be said about which vectors may be eliminated, but we will consider the elimination in \Cref{lem:beck-elimination} as a black box.

The following lemma allows us to break down the (potentially complicated) set $\Conv(S(V))$ into a union of convex hulls of all the parallelotopes contained within it.

\begin{lemma}
\label{lemma:covering}
    If $V = (v_1,\dotsc,v_n)$ is a sequence of at least $k\geq d$ vectors in $\RR^d$ and $\cA$ is the family of those subsets $X\sseq S(V)$ isomorphic to $S(W)$ for some $W\sseq V$ with $\card{W} = k$, then
    \begin{equation*}
        \Conv(S(V)) = \bigcup \set[\big]{\Conv(X) \st X\in \cA}.
    \end{equation*}
    In other words, defining $p+X = \set{p+x \st x \in X}$ for any point $p$ and set $X$,
    $$\Conv(S(V)) = \bigcup_{\substack{ W \subseteq V \\ \card W=k}} \bigcup_{p \in S(V \setminus W)} (p+\Conv(W)) . $$
\end{lemma}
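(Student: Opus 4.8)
The plan is to prove the two displayed formulas separately, then observe that they coincide. First I would establish that
\[
\Conv(S(V)) = \bigcup_{\substack{W\sseq V\\ \card W = k}} \Conv\p[\big]{S((V\setminus W)\union W')}
\]
is trivially true because $(V\setminus W)\union W' = V$ for every such $W$ — but that is not what is wanted; rather the family $\cA$ consists of \emph{translates} of $S(W)$ sitting inside $S(V)$. So the correct reading is: a set $X\in\cA$ is of the form $p + S(W)$ where $p = \sum_{i\in I}\eta_i v_i$ is a fixed signed sum of the vectors in $V\setminus W$ (i.e.\ $p\in S(V\setminus W)$) and $W\sseq V$ has size $k$. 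The two displays in the lemma are then the same statement, since $\Conv(p+S(W)) = p+\Conv(S(W)) = p+Z(W) = p+\Conv(W)$ by \Cref{CSeqZ} applied to $W$ and the fact that translation commutes with taking convex hulls. (Here I am writing $\Conv(W)$ for $Z(W)$, matching the paper's second display.)

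For the inclusion $\supseteq$: each $p+\Conv(W)$ with $p\in S(V\setminus W)$ is contained in $Z(V) = \Conv(S(V))$. Indeed, writing $p = \sum_{i\in I}\eta_i v_i$ with the $\eta_i\in\set{-1,+1}$ and $I$ indexing $V\setminus W$, any point of $p+\Conv(W)$ has the form $\sum_{i\in I}\eta_i v_i + \sum_{j\notin I}\lambda_j v_j$ with each $\lambda_j\in[-1,1]$; since also each $\eta_i\in[-1,1]$, this lies in $Z(V)$, which equals $\Conv(S(V))$ by \Cref{CSeqZ}. Taking the union over all $W$ and all $p$ gives $\bigcup(\dotsb)\sseq\Conv(S(V))$.

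For the inclusion $\sseq$, which is the substantive direction: let $z = \sum_{i=1}^n \lambda_i v_i \in Z(V) = \Conv(S(V))$ with each $\lambda_i\in[-1,1]$. I want to round all but $k$ of the coordinates to $\pm1$ without leaving $Z(V)$, so that $z$ lands in some $p+\Conv(W)$. The mechanism is exactly the one used in the sketch of \Cref{lem:beck-elimination} (and in \Cref{CSeqZ}): if strictly more than $k\geq d$ of the $\lambda_i$ lie in the open interval $(-1,1)$, say the indices in a set $J$ with $\card J\geq d+1$, then the vectors $\set{v_i \st i\in J}$ are linearly dependent in $\RR^d$, so there is a nontrivial relation $\sum_{i\in J}\mu_i v_i = 0$. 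Moving along the line $t\mapsto z + t\sum_{i\in J}\mu_i v_i = \sum_i \lambda_i' v_i$ (which does not change the value of $z$, since the added vector is zero) until the first coordinate $\lambda_i'$ with $i\in J$ hits $\pm1$ — this happens at a finite $t$ since not all $\mu_i$ are zero — reduces $\card{\set{i : \lambda_i'\in(-1,1)}}$ by at least one while keeping all coordinates in $[-1,1]$, hence keeping the point in $Z(V)$. Iterating, I reach a representation $z = \sum_i \lambda_i^* v_i$ with at most $k$ coordinates in $(-1,1)$; let $W$ be a size-$k$ sub-multiset of $V$ containing all indices with $\lambda_i^*\in(-1,1)$ (possible since $\card V\geq k$), and set $p = \sum_{i\notin W}\lambda_i^* v_i = \sum_{i\notin W}\eta_i v_i$ where $\eta_i = \lambda_i^*\in\set{-1,+1}$. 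Then $p\in S(V\setminus W)$ and $z = p + \sum_{i\in W}\lambda_i^* v_i \in p+\Conv(W)$, as required. The main obstacle — really the only point needing care — is the rounding argument: making sure that peeling off coordinates one at a time via linear dependencies terminates with exactly at most $k$ free coordinates and never exits the zonotope; both follow because $k\geq d$ guarantees a dependency whenever more than $d$ (a fortiori more than $k$) coordinates are strictly interior, and because scaling "up until the first hit" keeps every coordinate within $[-1,1]$ by construction.
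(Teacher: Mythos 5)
Your proof is correct and takes essentially the same approach as the paper: find a nontrivial linear dependence among the vectors whose coefficients are still strictly interior, translate along it until a coefficient hits $\pm1$, and repeat until at most $k$ coefficients remain free. The only cosmetic difference is that the paper packages the iteration as an induction on $\card{V}$ reducing to the case $\card{V}=k+1$, whereas you run the elimination loop directly; the substance is identical.
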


\begin{proof}
    It is immediate that $\Conv(S(V)) \supseteq \bigcup \set[\big]{\Conv(X) \st X\in \cA}$ so it suffices to prove the reverse inclusion.
    By induction, it suffices to consider the case wherein $\card{V} = k + 1$.
    Let $V = (v_1,\dotsc,v_{k+1})$.
    Consider an arbitrary $p \in \Conv(S(V))$. By \Cref{CSeqZ}, $p=\sum_{i=1}^{k+1} \lambda_i v_i$ for some values $\lambda_i \in [-1, +1]$.
    As $k\geq d$, there are some nontrivial weights $\beta_1,\dotsc,\beta_{k+1} \in\RR$ such that $\sum_{i=1}^{k+1} \beta_i v_i = 0$.
    Thus we can re-write $p=\sum_{i=1}^{k+1} (\lambda_i + \gamma \beta_i) v_i$ for any constant $\gamma$.
    Thus we may pick $\gamma$ so that $\abs{\lambda_i + \gamma \beta_i} \leq 1$ for all $i$ and $\abs{\lambda_\l + \gamma \beta_\l} = 1$ for some $\l$.
    
    If $\lambda_\l + \gamma \beta_\l = 1$, then $p \in X$ where $X\sseq S(V)$ is the parallelotope $X = \set{ \sum_{i=1}^{k+1} \eta_i v_i \st \eta_i \in \set{-1,+1}, \eta_\l = 1}$.
    This shows that $\Conv(S(V)) \sseq \bigcup \set[\big]{\Conv(X) \st X\in \cA}$ 
    and so the claim is proved.
\end{proof}

Despite first appearances, \Cref{lem:beck-elimination,lemma:covering} are similar statements with similar proofs, each allowing us to reduce from considering a longer sequence of vectors in $\RR^d$ to working with many shorter sequences.
However, while these two lemmas could be combined into a single more general result, we have refrained from doing so in the interest of keeping the statements simple.

We also record the following fact (which may be trivially deduced from the cosine rule) for ease of referencing.

\begin{fact}
\label{fact:cosine}
    For $x,y$ unit vectors in $\RR^d$, $\inner{x}{y} = 1 - \delta$ if and only if $\norm{x - y} = \sqrt{2\delta}$.
\end{fact}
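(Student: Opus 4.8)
\textbf{Proof proposal for \Cref{fact:cosine}.}

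The plan is to apply the polarisation/cosine identity directly. For unit vectors $x,y \in \RR^d$, expand the squared norm of the difference:
\begin{equation*}
    \norm{x-y}^2 = \inner{x-y}{x-y} = \inner{x}{x} - 2\inner{x}{y} + \inner{y}{y} = 1 - 2\inner{x}{y} + 1 = 2\bigl(1 - \inner{x}{y}\bigr),
\end{equation*}
using $\norm{x} = \norm{y} = 1$ and bilinearity of the inner product. If $\inner{x}{y} = 1-\delta$, then the right-hand side equals $2\delta$, so $\norm{x-y}^2 = 2\delta$ and hence $\norm{x-y} = \sqrt{2\delta}$ (note $2\delta = \norm{x-y}^2 \geq 0$, so the square root is well-defined). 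Conversely, if $\norm{x-y} = \sqrt{2\delta}$, then squaring gives $2\bigl(1-\inner{x}{y}\bigr) = 2\delta$, so $\inner{x}{y} = 1-\delta$. This establishes both directions of the equivalence.

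There is essentially no obstacle here: the statement is a one-line consequence of expanding an inner product. The only mild point worth a sentence is that $\delta$ is implicitly constrained by $-1 \leq 1-\delta \leq 1$ (i.e.\ $0 \leq \delta \leq 2$) for such unit vectors to exist, but this is automatic from Cauchy--Schwarz and does not affect the stated biconditional, which holds as a formal identity relating the two quantities whenever either side is defined.
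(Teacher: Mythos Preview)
Your proof is correct and is exactly the ``trivial deduction from the cosine rule'' the paper alludes to; the paper does not spell out a proof at all, so your expansion of $\norm{x-y}^2 = 2(1-\inner{x}{y})$ is precisely what was intended.
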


Finally, we will use the following lemma, which roughly states that an approximately orthogonal sequence of vectors can be well-approximated by a genuinely orthogonal sequence of vectors.

\begin{lemma}
\label{lem:close-to-orthonormal}
    If $x_1,\dotsc,x_d\in \RR^d$ are unit vectors with $\abs{\inner{x_i}{x_j}} \leq \delta$ for all $1\leq i < j \leq d$ and some $\delta > 0$, then there is an orthonormal basis $e_1,\dotsc,e_d \in \RR^d$ with $\norm{x_i - e_i} \leq 3 \delta^{1/2} d$ for all $i$.
\end{lemma}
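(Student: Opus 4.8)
The plan is to build the orthonormal basis $e_1,\dots,e_d$ by Gram--Schmidt orthonormalisation of $x_1,\dots,x_d$, and then to track carefully how far each $e_i$ drifts from $x_i$, using the near-orthogonality hypothesis $\abs{\inner{x_i}{x_j}} \le \delta$ to control the Gram matrix. First I would set $G = (\inner{x_i}{x_j})_{i,j}$, so that $G = I + E$ where $E$ has zero diagonal and every off-diagonal entry bounded by $\delta$ in absolute value; hence $\norm{E}_{\mathrm{op}} \le \norm{E}_F \le \delta d$ (or the sharper $\norm{E}_{\mathrm{op}} \le (d-1)\delta$), and provided $\delta d < 1$ the vectors $x_1,\dots,x_d$ are linearly independent and $G$ is invertible with $\norm{G^{-1} - I}_{\mathrm{op}} = O(\delta d)$.

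The key steps, in order, are as follows. (i) Show linear independence and invertibility of $G$ as above, so the Gram--Schmidt process does not degenerate. (ii) Run Gram--Schmidt: set $u_1 = x_1$, and $u_k = x_k - \sum_{j<k} \inner{x_k}{e_j} e_j$ with $e_k = u_k / \norm{u_k}$. Prove by induction on $k$ that $\norm{e_j - x_j} \le C_j \delta$ for suitable constants $C_j$ depending only on $d$: each new correction term $\inner{x_k}{e_j}$ equals $\inner{x_k}{x_j}$ plus an error of size $O(\delta)$ coming from $\norm{e_j - x_j} = O(\delta)$, so $\inner{x_k}{e_j} = O(\delta)$, and summing $k-1 < d$ such terms gives $\norm{u_k - x_k} = O(\delta d)$; since $\norm{u_k}$ is within $O(\delta d)$ of $1$, normalising costs only another $O(\delta d)$, so $\norm{e_k - x_k} = O(\delta d)$. (iii) Finally, bookkeep the constants to confirm the explicit bound $\norm{x_i - e_i} \le 3\delta^{1/2} d$; note that the factor $\delta^{1/2}$ rather than $\delta$ in the statement means the bound is weak enough that the crude estimates above comfortably suffice (indeed $3\delta^{1/2} d \ge 3\delta d$ whenever $\delta \le 1$, and the case $\delta \ge 1/(9d^2)$ is trivial since then $\norm{x_i - e_i} \le 2 \le 6 \delta^{1/2} d$ regardless). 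It is worth splitting into these two regimes: if $\delta$ is not small enough to make $\delta d$ tiny, the conclusion holds vacuously because $3\delta^{1/2} d \ge 2 \ge \norm{x_i - e_i}$; otherwise proceed with the Gram--Schmidt analysis.

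The main obstacle I expect is the inductive control of the accumulated error in step (ii): when computing $\inner{x_k}{e_j}$ one must expand $e_j$ as $x_j$ plus a correction that is itself a combination of earlier $e_i$'s, so the errors compound across the recursion, and one has to check the constants $C_k$ do not blow up super-polynomially in $d$. Organising this cleanly — for instance by first proving the uniform bound $\abs{\inner{x_k}{e_j}} \le 2\delta$ for all $j < k$ (valid once $\norm{e_j - x_j}$ is known to be at most, say, $\delta$, which is itself what we are inducting on) — keeps the recursion linear rather than exponential, so that $C_k = O(d)$ throughout. An alternative that sidesteps delicate induction is to invoke a standard perturbation bound for the polar decomposition or the closest orthogonal matrix: writing $X = [x_1 \mid \dots \mid x_d]$, the nearest orthogonal matrix $Q$ to $X$ in Frobenius norm satisfies $\norm{X - Q}_F \le \norm{X^{\mathsf T} X - I}_F / (\sigma_{\min}(X) + 1) = O(\delta d)$, and one takes $e_i$ to be the $i$-th column of $Q$; this gives the result in one line modulo citing the perturbation estimate, but since the paper seems to prefer self-contained elementary arguments I would present the Gram--Schmidt version as the main proof.
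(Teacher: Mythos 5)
Your main route (Gram--Schmidt with inductive error tracking) is genuinely different from the paper's, which instead uses the polar decomposition $X = PU$ of the matrix $X = [x_1 \mid \dotsb \mid x_d]$: by a standard result (Horn--Johnson), with $\mu$ the mean singular value of $X$, one has $\norm{X - \mu U}^2 = \norm{X}^2 - d\mu^2$; since $X^{\mathsf T}X - I$ has all entries at most $\delta$, one gets $\mu \geq 1 - d\delta$, whence $\norm{X - U} \leq \sqrt{d - d(1-d\delta)^2} + d\delta \leq 3\delta^{1/2}d$, and the columns of $U$ serve as the $e_i$. You do mention this as an alternative, so you have the right idea in reserve; but you chose to present Gram--Schmidt as primary, and that version has a gap.

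The gap is in the induction hypothesis. You propose to induct on $\norm{e_j - x_j} \leq \delta$ in order to conclude $\abs{\inner{x_k}{e_j}} \leq 2\delta$. But this induction does not close: even for $k=2$, the correction $u_2 = x_2 - \inner{x_2}{e_1}e_1$ has $\norm{u_2 - x_2} = \abs{\inner{x_2}{x_1}}$, which can be as large as $\delta$ but not smaller in general, and after normalisation $\norm{e_2 - x_2}$ is already of size comparable to $\delta$, not smaller; by $k = d$ the naive recursion $C_k \lesssim (k-1)(1 + \max_{j<k} C_j)$ gives constants growing like $3^k$. The statement ``$\norm{e_j - x_j}$ is known to be at most $\delta$'' is thus stronger than what the induction could ever deliver, since you want to end with $\norm{e_k - x_k} = O(d\delta)$. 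The correct repair is to induct \emph{directly} on the bound $\abs{\inner{x_k}{e_j}} \leq 2\delta$ for all $j < k$, unwinding $e_j$ via its Gram--Schmidt definition: writing $e_j = \norm{u_j}^{-1}\bigl(x_j - \sum_{i<j}\inner{x_j}{e_i}e_i\bigr)$, one has
\[
\abs{\inner{x_k}{e_j}} \leq \frac{1}{\norm{u_j}}\Bigl(\abs{\inner{x_k}{x_j}} + \sum_{i<j}\abs{\inner{x_j}{e_i}}\,\abs{\inner{x_k}{e_i}}\Bigr) \leq \frac{\delta + (j-1)(2\delta)^2}{1 - O(d\delta^2)},
\]
which is at most $2\delta$ once $\delta \lesssim 1/d$, because the correction is \emph{quadratic} in the already-bounded inner products. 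From there $\norm{u_k - x_k} \leq 2(k-1)\delta = O(d\delta)$ and the normalisation only contributes $O(d\delta^2)$, giving $\norm{e_k - x_k} = O(d\delta)$ as you wanted. This is the extra idea your sketch misses: the induction has to be carried at the level of the inner products $\inner{x_k}{e_j}$, not at the level of the vector norms $\norm{e_j - x_j}$, precisely because it is the product structure that prevents the exponential blow-up.

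Your case split (handle $\delta \gtrsim 1/d^2$ trivially since $\norm{x_i - e_i} \leq 2$) is a sensible safeguard and correctly exploits the slack from the exponent $\delta^{1/2}$ in the target bound, though the polar-decomposition route makes it unnecessary.
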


\begin{proof}
    Let $X$ be the square matrix with column $i$ given by $x_i$.
    It is a standard result, which can be found for example in the textbook of Horn and Johnson \cite[Section 7.4.4]{horn2012matrix}, that the shortest distance from $X$ to a scaling of a unitary matrix is given as follows.
    If $X=PU$ is the polar decomposition of $X$ (where $U$ is unitary, and in fact real as the matrix $X$ is real), and if $\mu$ is the mean singular value of $X$, then
    \begin{equation*}
        \norm{X - \mu U}^2 = \norm{X}^2 - d \mu^2.
    \end{equation*}
    We may note that every entry of $X^TX - I$ is at most $\delta$ in absolute value, and so $\mu \geq 1 - d\delta$, and so
    \begin{align*}
        \norm{X - U} &\leq \norm{X - \mu U} + \norm{U - \mu U}  \\
        &\leq  \sqrt{d - d(1 - d\delta)^2} + d \delta \\
        &\leq 3 \delta^{1/2} d.
    \end{align*}
    Noting that $U$ is a real orthogonal matrix, and thus its columns are an orthonormal basis of $\RR^d$, the desired result follows immediately.
\end{proof}

%----------------------------------------------------------%

\section{Exponentially small probability of a signed sum with norm at most 1}
\label{sec:construction}

In this section we prove \Cref{thm:construction}.
Throughout this section, $n$ is odd and $v_1,\dotsc,v_n$ are as defined in \Cref{thm:construction}.
That is, $v_n = (1,0)$, and $v_{2i-1} = v_{2i} = (\cos \theta_i, \sin\theta_i)$, where $\theta_i = \arcsin c^i$ for $c = 1/20$.
We must show that at most $2^{\ceil{n/2}}$ of the possible signed sums of $v_1,\dotsc,v_n$ lie within distance 1 of the origin.
Indeed, it suffices to prove the following claim.

\begin{claim}
\label{claim:construction}
    If $\norm{\sum_{i=1}^n \eta_i v_i} \leq 1$, then, for all $1\leq i \leq \floor{n/2}$, we have $\eta_{2i-1} = - \eta_{2i}$.
\end{claim}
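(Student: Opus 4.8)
The plan is to prove the contrapositive of \Cref{claim:construction}: if $\eta_{2i-1} \neq -\eta_{2i}$ for some $i \leq \floor{n/2}$, then the signed sum has norm strictly greater than $1$. Write $m = \floor{n/2}$ and, for $1 \leq i \leq m$, set $\epsilon_i = \eta_{2i-1} + \eta_{2i} \in \set{-2, 0, +2}$. Since $v_{2i-1} = v_{2i} = (\cos\theta_i, \sin\theta_i)$ with $\sin\theta_i = c^i$, the signed sum $\sum_{i=1}^n \eta_i v_i$ equals the point $(x,y)$ with $x = \eta_n + \sum_{i=1}^m \epsilon_i \sqrt{1-c^{2i}}$ and $y = \sum_{i=1}^m \epsilon_i c^i$, and the hypothesis of the contrapositive says $\epsilon_j \neq 0$ — hence $\abs{\epsilon_j} = 2$ — for some $j$, which we take minimal. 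The argument then rests on a separation of scales between the two coordinates: $y$ is of first order in the $c^i$, whereas $x$ differs only at second order from the odd integer $N \defined \eta_n + \sum_{i=1}^m \epsilon_i$.

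For the $y$-coordinate, since $\epsilon_i = 0$ for $i < j$, a geometric-series estimate gives
\[
  \abs{y} \;\geq\; 2c^j - 2\sum_{i>j} c^i \;\geq\; 2c^j\Bigl(1 - \tfrac{c}{1-c}\Bigr) \;=\; \tfrac{36}{19}\,c^j ,
\]
so $y^2 \geq \tfrac{1296}{361}\,c^{2j} > 3.5\,c^{2j}$. For the $x$-coordinate, $x - N = \sum_i \epsilon_i\bigl(\sqrt{1-c^{2i}} - 1\bigr)$, and each $1 - \sqrt{1-c^{2i}}$ is of order $c^{2i}$; using the sharp bound $1 - \sqrt{1-t} \leq t/(1+\sqrt{1-c^2})$ for $0 \leq t \leq c^2$ and summing the tail from $i=j$ gives $\abs{x-N} \leq \tfrac{2}{(1+\sqrt{1-c^2})(1-c^2)}\,c^{2j} < 1.05\,c^{2j}$ (while summing from $i = 1$ gives $\abs{x - N} < 1$ outright). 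Now split on $\abs N$, which is odd. If $\abs N \geq 3$, then $\abs x \geq \abs N - \abs{x - N} > 2 > 1$ and we are done. If $\abs N = 1$, then, using $N^2 = 1$,
\[
  x^2 + y^2 - 1 \;=\; 2N(x-N) + (x-N)^2 + y^2 \;\geq\; y^2 - 2\abs{x-N} \;>\; (3.5 - 2.1)\,c^{2j} \;>\; 0 ,
\]
contradicting $\norm{(x,y)} \leq 1$. This proves the claim; the theorem then follows because every sign pattern with $\eta_{2i-1} = -\eta_{2i}$ for all $i \leq m$ makes the pairs $v_{2i-1}, v_{2i}$ cancel, leaving the sum $\eta_n v_n = \pm(1,0)$ of norm exactly $1$, so these $2^{\ceil{n/2}}$ patterns are precisely those achieving norm at most $1$, giving probability $2^{\ceil{n/2}}/2^n = 2^{-\floor{n/2}}$.

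The step I expect to be delicate is the calibration of constants in the case $\abs N = 1$, where the two estimates above leave only a small positive margin. The crude bound $1 - \sqrt{1-t} \leq t$ would give $\abs{x-N} \lesssim \tfrac{2}{1-c^2}c^{2j} \approx 2.005\,c^{2j}$ and hence $y^2 - 2\abs{x-N} \lesssim (3.59 - 4.01)c^{2j} < 0$, which fails; one genuinely needs that $1 - \cos\theta_i = 1 - \sqrt{1 - \sin^2\theta_i}$ behaves like $\tfrac12\sin^2\theta_i$ rather than $\sin^2\theta_i$. That extra factor of (essentially) $2$ is exactly what closes the gap, and is the only place where the numerology matters — any $c \leq 1/10$ would do just as well.
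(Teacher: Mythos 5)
Your proof is correct and follows essentially the same route as the paper's: take the first pair with equal signs, lower-bound $\abs{y}$ by a geometric series, upper-bound the deviation of $x$ from the odd integer $N$ using $1 - \sqrt{1-t} \approx t/2$ (the paper uses the equivalent bound $\Delta_i \leq 0.51\,y_i^2$), and conclude $x^2 + y^2 > 1$. The only cosmetic difference is that you split explicitly on $\abs N \geq 3$ versus $\abs N = 1$, whereas the paper folds this into the single inequality $\abs x \geq 1 - 1.03\,c^{2k}$; your observation that the factor $\tfrac12$ in the expansion of $\cos$ is what makes the numerology close is exactly the point the paper's choice of $0.51$ is exploiting.
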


\begin{proof}
    Suppose that not all $\eta_{2i-1} = - \eta_{2i}$. Let $\eta_{2k-1} = \eta_{2k}$ be the first equal pair. 
    Each pair sums either to 0 (including for all pairs $i<k$) or to $(x_i,y_i) = 2(\sqrt{1-y_i^2}, y_i)$ or its negation. 
    
    The $y$ coordinate of the sum of all pairs and $v_n$ (contributing 0) thus has
    \begin{align*}
        \abs{y} &= 2y_k + \sum_{i > k} \pm 2y_i 
         \geq 2(y_k - \sum_{i > k} y_i)
         = 2 ( c^k - \frac{c^{k+1}}{1-c} )
         = 2 c^k \cdot \frac{18}{19}  ,  
    \end{align*}
    using $c=1/20$.

    For the $x$ coordinate, write $\sqrt{1-y_i^2}$ as $1-\Delta_i$ and note that $\Delta_i \leq 0.51 y_i^2$ for $y_i \leq c = 1/20$.
    Let $I = \set{i \st \eta_{2i-1} = \eta_{2i}}$.
    The $x$ coordinate of the sum of all pairs and $v_n$ (contributing~1) is $x=1 + 2\sum_{i\in I} \pm (1-\Delta_i)$. Since $\sum_{i \geq 1}\Delta_i<1/4$,
    \begin{align*}
        \abs{x} & \geq 1 - 2\sum_{i\geq k} \Delta_i
        \geq 1- 2\sum_{i\geq k} 0.51 y_i^2
        = 1- 1.02 \frac{(c^k)^2}{1-c^2}
        \geq 1 - 1.03 c^{2k} .
    \end{align*}

    This gives sum vector $(x,y)$ with squared length $x^2+y^2>1$. Thus, the only way to achieve length 1 or less is to have $\eta_{2i-1} = - \eta_{2i}$ in every pair.
\end{proof}

With \Cref{claim:construction} proved, so too is \Cref{thm:construction}.

%----------------------------------------------------------%

\section{Bounds for \texorpdfstring{$d\geq 3$}{d >= 3}}
\label{sec:balancing}

In this section we reduce \Cref{thm:balancing} to \Cref{lem:dichotomy,lem:stability}.
We now state and explain these lemmas, outline how they are used to prove \Cref{thm:balancing}, and then give the proof in full.
Then, in \Cref{sec:stability}, we will give the proofs of the lemmas.

\begin{lemma}
\label{lem:dichotomy}
    There is $\eps_0 > 0$ such that, for all $\eps \in (0,\eps_0)$, there is $\zeta = \zeta(\eps,d) > 0$ such that for all sequences of unit vectors $v_1,\dotsc,v_{d+1}\in \RR^d$, either
    \begin{itemize}
        \item $(v_1,\dotsc,v_{d+1})$ is $(d - \eps)$-approximating, or
        \item for all distinct $i,j$, we have either $\abs{\inner{v_i}{v_j}} < \zeta$ or $\abs{\inner{v_i}{v_j}} > 1 - \zeta$.
    \end{itemize}
    In particular, we may take $\zeta = 18 \, \eps^{1/4} d^4$.
\end{lemma}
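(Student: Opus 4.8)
The plan is to prove the contrapositive of the dichotomy in a quantitative form: if $(v_1,\dots,v_{d+1})$ is \emph{not} $(d-\eps)$-approximating, then every pairwise inner product $\inner{v_i}{v_j}$ is either very small or very close to $\pm 1$ in absolute value, with the gap governed by $\zeta = 18\,\eps^{1/4}d^4$. The starting point is \Cref{lem:basic}, which tells us that any sequence of unit vectors is $d$-approximating; so the failure to be $(d-\eps)$-approximating means we are only $\eps$ away from the trivial bound, and the hope is that this forces a rigid near-orthogonal-with-repetitions structure. First I would set up the following dichotomy for a single pair, say $v_1,v_2$: suppose $\zeta \le \abs{\inner{v_1}{v_2}} \le 1-\zeta$, i.e.\ the pair is ``genuinely oblique'' — neither nearly orthogonal nor nearly (anti)parallel — and derive a contradiction by exhibiting, for \emph{every} choice of $\lambda_i \in [-1,1]$, signs $\eta_i$ with $\norm{\sum (\lambda_i+\eta_i)v_i}^2 \le d-\eps$.

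The key mechanism I expect to use is that an oblique pair $v_1,v_2$ lets us ``cancel'' more than one unit of squared length. Concretely, if $w = v_1+v_2$ or $w = v_1 - v_2$ (choosing the sign so that $\norm{w}\le \sqrt{2}$), then $w \ne 0$ but $\norm{w}^2 = 2(1\pm\inner{v_1}{v_2})$ is bounded away from $2$ by roughly $2\zeta$, so replacing the pair $(v_1,v_2)$ by a single vector proportional to $w$ (or exploiting the one-dimensional freedom along $w$ inside $Z(V)$) should improve Beck's bound. The technically cleanest route is probably this: given target coefficients $\lambda_1,\lambda_2$, note that $\lambda_1 v_1 + \lambda_2 v_2$ lies in the parallelogram spanned by $v_1,v_2$, and because the parallelogram is ``fat'' (the vectors are oblique, so its width in every direction is at least $\approx\zeta$), one can reach it from a corner $\eta_1 v_1 + \eta_2 v_2$ with $\norm{(\lambda_1+\eta_1)v_1 + (\lambda_2+\eta_2)v_2}^2 \le 2 - c\zeta$ for an absolute constant $c$, rather than the generic bound of $2$. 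The remaining $d-1$ vectors $v_3,\dots,v_{d+1}$ are $(d-1)$-approximating by \Cref{lem:basic} (they are $d-1$ vectors in the at most $(d-1)$-dimensional... — actually they need not span a hyperplane, but \Cref{lem:basic} applies regardless since it only needs length $\le 1$, giving $(d-1)$-approximating in $\RR^{d-1}$, and we can always embed). Adding a square-distance $\le d-1$ from the last $d-1$ vectors to a square-distance $\le 2-c\zeta$ from the first pair gives total $\le d+1-c\zeta$, which is $\le d - \eps$ once $c\zeta \ge 1+\eps$ — and since $\zeta = 18\eps^{1/4}d^4 \gg \eps$, this is where the numerics must be made to work; I would choose $\eps_0$ small enough and track constants so that $18\eps^{1/4}d^4$ comfortably beats the overhead.

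The main obstacle, I expect, is the ``fat parallelogram'' estimate and its clean quantitative form: one must show that an oblique pair of unit vectors spans a parallelogram whose covering radius (the largest distance from a point of $Z(v_1,v_2)$ to the nearest corner $S(v_1,v_2)$) is genuinely less than $\sqrt{2}$ — by an amount polynomial in $\zeta$ — and then combine this with the separate $(d-1)$-approximation without the errors compounding badly. There is also a subtlety about whether, when $v_1,v_2,v_3,\dots$ are not in ``general position,'' the split into ``first pair'' plus ``remaining $d-1$'' is legitimate; I would handle this by applying \Cref{lem:beck-elimination} or \Cref{lemma:covering} to reduce cleanly, or simply by noting that $(d-1)$-approximation of $d-1$ vectors of length $\le 1$ holds in any ambient dimension by \Cref{lem:basic}. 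Finally, to get the clean statement ``for \emph{all} distinct $i,j$,'' I would observe that the argument above shows: if \emph{some} pair is oblique, then $V$ is $(d-\eps)$-approximating; contrapositively, if $V$ is not $(d-\eps)$-approximating, then \emph{every} pair is non-oblique, which is exactly the second bullet. The exponent $1/4$ on $\eps$ (rather than, say, $1$) presumably arises from converting an inner-product bound through \Cref{fact:cosine}-type square-root losses and the polar-decomposition estimate of \Cref{lem:close-to-orthonormal}-flavoured arguments, so in the writeup I would be generous with the constant $18$ and the power $d^4$ to absorb all such losses.
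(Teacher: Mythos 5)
Your plan breaks down at the decisive step, and the failure is not a matter of tracking constants more carefully.

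You propose to handle the $d+1$ vectors as ``oblique pair $(v_1,v_2)$'' plus ``remaining $d-1$ vectors,'' getting squared error $\le 2-c\zeta$ from the pair and $\le d-1$ from the rest, for a total of $d+1-c\zeta$. For this to be $\le d-\eps$ you need $c\zeta \ge 1+\eps$, but $\zeta = 18\eps^{1/4}d^4 \to 0$ as $\eps\to 0$, so shrinking $\eps_0$ (as you suggest) makes the inequality \emph{harder}, not easier; there is no choice of absolute constant $c$ and no $\eps_0>0$ for which $c\zeta \ge 1$ holds throughout $(0,\eps_0)$. Concretely, your bound $d+1-c\zeta$ is strictly worse than the trivial bound $d$ from \Cref{lem:basic} once $\zeta<1/c$, so the ``gain'' from the oblique pair is never enough to pay for the extra unit you incur by including a $(d+1)$-st vector. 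The issue is that you are treating the $(d+1)$-st vector as a liability, whereas the whole point is that it should be an asset: having one more vector than the dimension gives a new degree of freedom that must be exploited, not just absorbed into Beck's additive bound.

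The idea you are missing is the one the paper uses in its Step 5. After reducing (via \Cref{lemma:covering}) to approximating points inside $d$-dimensional parallelotopes, the hard case is the (nearly) orthonormal hypercube $\Conv(S(E))$ with $E=(e_1,\dotsc,e_d)$: points near its centre sit at squared distance essentially $d$ from every vertex $\sum \pm e_i$, so no pair-cancellation can help there. The paper resolves this by showing that the centre of that hypercube is much closer to $\sum_{i}(2y_i-1)e_i$, a signed sum that uses $y$ with the \emph{opposite} sign, i.e.\ a point of $S(E)+2y$ rather than $S(E)$. It is precisely because $y$ is oblique to the $e_i$ that all the coordinates $y_i$ are bounded away from both $0$ and $1$, which makes $\sum(2y_i-1)^2$ strictly less than $d$. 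Nothing in your proposal does this; instead you always approximate with the same parity pattern (corners of a single parallelotope), which cannot beat $d$ near the centre. Relatedly, your plan also doesn't address the $d$-subsequences that drop one vector of the oblique pair — \Cref{lemma:covering} forces you to consider the parallelotope spanned by such a subsequence, and it may be nearly orthonormal, so neither your oblique-pair gain nor \Cref{lem:basic} alone suffices there; you again need the extra vector and the shift trick. These are the genuine content of the paper's proof (its Steps 2--5, built on \Cref{lem:close-to-orthonormal}, \Cref{lemma:covering}, and \Cref{lem:stability}), and your proposal as written never reaches them.
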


\Cref{lem:dichotomy} gives a dichotomy, stating that a sequence of $d+1$ vectors is either sufficiently well-approximating to deduce \Cref{thm:balancing}, or every pair of the vectors is either almost parallel or almost orthogonal.
Indeed, we will call such a sequence of vectors \emph{$\zeta$-almost orthogonal}.
We will also use the following lemma, which is a generalisation of \Cref{lem:basic}.

\begin{lemma}
\label{lem:stability}
    Given unit vectors $v_1,\dotsc,v_d\in \RR^d$ and reals $\lambda_1,\dotsc,\lambda_d \in [-1,1]$, there are signs $\eta_1,\dotsc,\eta_d \in \set{-1,+1}$ such that $\norm{ \sum_{i=1}^d (\eta_i + \lambda_i) v_i} \leq \sqrt{d}$.
    In particular, the vectors $(v_1,\dotsc,v_d)$ are $d$-approximating.
    Moreover, the following both hold for all $\delta > 0$.
    \begin{itemize}
        \item If there are $1\leq i<j \leq d$ such that $\abs{\inner{v_i}{v_j}} \geq \delta$, then the vectors $(v_1,\dotsc,v_d)$ are $(d - \delta^2)$-approximating.
        \item If $\abs{\lambda_i} > \delta$ for some $i$, then the vectors $(v_1,\dotsc,v_d)$ are $(d - \delta)$-approximating.    
    \end{itemize}
\end{lemma}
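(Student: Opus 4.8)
The plan is to prove the basic claim first by an averaging argument in the spirit of Beck's Lemma~2.2, and then extract the two "moreover" statements by tracking the slack in that argument. For the basic claim, fix $\lambda_1,\dots,\lambda_d \in [-1,1]$ and consider choosing the signs $\eta_i$ uniformly and independently in $\set{-1,+1}$. Then
\begin{equation*}
  \expec{\norm[\Big]{\sum_{i=1}^d (\eta_i + \lambda_i) v_i}^2}
  = \sum_{i,j} \inner{v_i}{v_j}\, \expec{(\eta_i+\lambda_i)(\eta_j+\lambda_j)}.
\end{equation*}
For $i \neq j$ the factors are independent, so the expectation of the product is $\lambda_i \lambda_j$; for $i = j$ it is $\expec{(\eta_i+\lambda_i)^2} = 1 + \lambda_i^2$ since $\eta_i^2 = 1$ and $\expec{\eta_i} = 0$. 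Hence the expectation equals $\sum_i (1+\lambda_i^2)\norm{v_i}^2 + \sum_{i\neq j}\lambda_i\lambda_j\inner{v_i}{v_j} = \norm{\sum_i \lambda_i v_i}^2 + \sum_i(1 - \lambda_i^2 + \dots)$—more cleanly, it equals $\norm{\sum_i \lambda_i v_i}^2 + \sum_i (1-\lambda_i^2)$, using $\norm{v_i}=1$. Wait: expanding $\norm{\sum_i\lambda_i v_i}^2 = \sum_i \lambda_i^2 + \sum_{i\neq j}\lambda_i\lambda_j\inner{v_i}{v_j}$, so the difference is exactly $\sum_i(1+\lambda_i^2) - \sum_i \lambda_i^2 = d$. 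Thus the expected squared norm is $\norm{\sum_i\lambda_i v_i}^2 + d$ — which is at least $d$, the wrong direction. So a crude average over all $2^d$ sign patterns is not enough; instead one should average over the $d+1$ "antipodal-swept" patterns or, more simply, note that the expectation of $\norm{\sum_i(\eta_i+\lambda_i)v_i}^2$ over $\eta$ and over $-\eta$ paired is still the same, so one must instead bound $\min_\eta$ by a smarter average. The standard fix (this is exactly Beck's argument) is: pair up $\eta$ with $-\eta$ and observe $\norm{\sum(\eta_i+\lambda_i)v_i}^2 + \norm{\sum(-\eta_i+\lambda_i)v_i}^2 = 2\norm{\sum\lambda_i v_i}^2 + 2\sum_i\norm{v_i}^2 = 2\norm{\sum\lambda_i v_i}^2 + 2d$, which still doesn't give $\le d$ per term. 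The actual route is: by \Cref{lem:basic} the sequence is $d$-approximating, giving the first sentence directly; the real content is the two refinements, so I would simply cite \Cref{lem:basic} for the first claim and concentrate on the quantitative improvements.

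For the two refinements, I would revisit the proof of \Cref{lem:basic} (Beck's Lemma~2.2) and quantify the loss. Beck's argument chooses $\eta$ greedily/probabilistically so that $\norm{\sum(\eta_i+\lambda_i)v_i}^2 \le \sum_i \norm{v_i}^2 = d$; the inequality is an equality only in a degenerate situation. Concretely, in the pairing identity one has for a random $\eta$:
\begin{equation*}
  \min_\eta \norm[\Big]{\sum_i(\eta_i+\lambda_i)v_i}^2 \le \expec{\norm[\Big]{\sum_i(\eta_i+\lambda_i)v_i}^2} - \Var,
\end{equation*}
or more usefully, one builds $\eta$ coordinate by coordinate, choosing $\eta_k$ at step $k$ to minimise, and the expected decrease at step $k$ from the optimal choice is at least the variance contribution $= \abs{\inner{w_{k-1} + \lambda_k v_k}{v_k}}$ type terms. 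I would make this precise: run the greedy process, let $u = \sum_{i=1}^d \eta_i v_i$ be the final signed sum and $w = \sum_i \lambda_i v_i$; Beck's bound says $\norm{u + w}^2 \le d$ but in fact at the step where we fix $\eta_k$ we gain an extra $2\abs{\inner{u_{<k} + w_{<k}}{v_k}}$ or similar, and by choosing a good order we can ensure this is $\ge \delta^2$ (respectively $\ge \delta$). For the $\abs{\inner{v_i}{v_j}}\ge\delta$ case: handle the pair $i,j$ together — after fixing all other signs, the two-variable subproblem $\eta_i v_i + \eta_j v_j + (\text{rest})$ has, because $v_i,v_j$ are not orthogonal, the four sign choices not all equidistant, and the best one beats the average by a margin controlled by $\abs{\inner{v_i}{v_j}}$, yielding the $-\delta^2$. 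For the $\abs{\lambda_i}>\delta$ case: the variable $v_i$ with large $\lambda_i$ contributes $(\eta_i+\lambda_i)^2 \in \set{(1-\abs{\lambda_i})^2,(1+\abs{\lambda_i})^2}$, and picking $\eta_i = -\sgn(\lambda_i)$ gives $(1-\abs{\lambda_i})^2 \le 1 - \delta$ for that coordinate, saving $\delta$ over the budget of $1$ — then applying \Cref{lem:basic} to the remaining $d-1$ vectors with their $\lambda$'s gives $\le (d-1)$ for the rest, for a total $\le d - \delta$. This last case is essentially immediate; the first "moreover" is the one requiring care.

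The main obstacle I expect is the first refinement: cleanly extracting a quantitative $\delta^2$ gain from the non-orthogonality of a single pair, while the remaining $d-2$ vectors are handled by \Cref{lem:basic}. The clean way is to separate: fix signs $\eta_k$ for $k \notin \set{i,j}$ using \Cref{lem:basic} applied to $(v_k)_{k\neq i,j}$ with the given $\lambda_k$, absorbing $p \defined \sum_{k\neq i,j}(\eta_k+\lambda_k)v_k$ with $\norm{p}^2 \le d-2$; then we must choose $\eta_i,\eta_j$ to make $\norm{p + (\eta_i+\lambda_i)v_i + (\eta_j+\lambda_j)v_j}^2$ small. Over the four choices of $(\eta_i,\eta_j)$, expand and use $\expec{(\eta_i+\lambda_i)(\eta_j+\lambda_j)} = \lambda_i\lambda_j$ together with the fact that the minimum is below the mean by at least the "spread" term $\abs{\inner{v_i}{v_j}}$ coming from the cross term $2(\eta_i+\lambda_i)(\eta_j+\lambda_j)\inner{v_i}{v_j}$, whose four values have range $\ge 2\abs{\inner{v_i}{v_j}} \ge 2\delta$, so the min is $\le (\text{mean}) - \delta$... getting exactly $\delta^2$ rather than $\delta$ will require being slightly more careful (the cross term with two independent signs has a $\pm$ structure whose min beats its mean by the full $2\abs{\inner{v_i}{v_j}}$, but this must be balanced against the $(\eta_i+\lambda_i)^2 + (\eta_j+\lambda_j)^2$ part which can simultaneously be as large as $(1+\abs{\lambda_i})^2+(1+\abs{\lambda_j})^2$), and tracking the interaction between the diagonal and cross terms is where the $\delta^2$ (as opposed to $\delta$) naturally appears. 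Once the two-variable computation is done, the bound $\norm{p}^2 + (\text{two-variable min}) \le (d-2) + (2 - \delta^2) = d - \delta^2$ follows.
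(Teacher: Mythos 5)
There is a genuine gap. The paper's proof is an \emph{inductive} argument that builds the signed sum one vector at a time, and the two ``moreover'' refinements are obtained by processing the special vector(s) \emph{first}: for the oblique-pair case, one first sets $\eta_1$ (so that $s_1 = (\lambda_1 + \eta_1)v_1$ lies \emph{along} $v_1$) and then chooses $\eta_2$; a geometric chord lemma then shows that the chord of the radius-$r$ circle through $s_1$ in direction $v_2$ has length $2\sqrt{r^2\sin^2\theta + a^2\cos^2\theta}$, where $\theta$ is the angle between $v_2$ and the normal to $Os_1$, i.e.\ $\abs{\sin\theta}=\abs{\inner{v_1}{v_2}}\geq\delta$. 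Because $s_1$ is a multiple of $v_1$, the angle $\theta$ is determined by the pair $(v_1,v_2)$ alone, and one gets $\norm{s_2}^2\leq 2-\delta^2$; the remaining $d-2$ vectors are then added by the basic Beck step, which adds exactly $1$ per vector, for a total $d-\delta^2$. The $\abs{\lambda_1}>\delta$ case is the same idea with one vector: set $\eta_1=-\operatorname{sgn}(\lambda_1)$ to get $\norm{s_1}^2=(1-\abs{\lambda_1})^2\leq 1-\delta$, then run Beck's step $d-1$ more times.

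Your proposal reverses this order: you process the other $d-2$ vectors first, landing at $p$ with $\norm{p}^2\leq d-2$, and then try to finish with the pair $(v_i,v_j)$. The final step you write, ``the bound $\norm{p}^2 + (\text{two-variable min}) \leq (d-2)+(2-\delta^2)$ follows,'' is an additive decomposition of $\norm{p+q}^2$ that silently drops the cross term $2\inner{p}{q}$; this is not valid, since $\norm{p+q}^2 = \norm{p}^2 + 2\inner{p}{q} + \norm{q}^2$ and the cross term can be of order $\sqrt{d}$. The same issue appears in your treatment of the $\abs{\lambda_i}>\delta$ case, where you assert ``applying \Cref{lem:basic} to the remaining $d-1$ vectors gives $\leq (d-1)$ for the rest, for a total $\leq d-\delta$'': again the savings are not additive, and the correct statement is the sequential one (start Beck's induction from $\norm{s_1}^2\leq 1-\delta$, not add $d-1$ to it). Moreover, if one does try to run the sequential argument in your order (process $v_i,v_j$ last), the gain from obliqueness is not automatic: the chord-length gain depends on the angle between $v_j$ and the accumulated sum $p + (\eta_i+\lambda_i)v_i$, whose direction is not controlled because $p$ can point anywhere; your four-way averaging (``min beats mean by the spread of the cross term'') is a heuristic that does not give a clean $\delta^2$ without substantial additional case analysis accounting for the interaction between the $p$-cross-terms, the diagonal terms, and the $v_i$--$v_j$ cross term. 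The paper sidesteps all of this by making $s_1$ colinear with $v_1$ before $v_2$ is introduced, and by replacing your variance intuition with an explicit geometric chord-length computation (\Cref{claim:geometry}).

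To repair your write-up, the two necessary changes are: (1) process the oblique pair (resp.\ the vector with large $\abs{\lambda_i}$) first, not last, so that the first accumulated point is a scalar multiple of a single known vector; and (2) replace the additive bound $\norm{p}^2 + \norm{q}^2$ by the correct sequential chord estimate, i.e.\ re-derive the inductive step quantitatively (if $\norm{s_m}\leq\sqrt{r^2-1}$ then some sign gives $\norm{s_{m+1}}\leq r$, with a quadratic improvement when the step direction is oblique to $s_m$). Your identification that the first sentence of the lemma is exactly \Cref{lem:basic}, and that the content lies in the two refinements, is correct.
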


Indeed, whereas \Cref{lem:basic} tells us that $\sum \lambda_i v_i$ can be well-approximated by $\sum \eta_i v_i$, where $\lambda_i \in [-1,+1]$ and $\eta_i \in \set{-1,+1}$, \Cref{lem:stability} gives two conditions, either of which is sufficient for the approximation to be better than the worst case.
This shows that the worst case approximation of $\norm{\sum(\lambda_i - \eta_i) v_i} \approx \sqrt{d}$ is only necessary when the $n$ vectors are approximately orthogonal and the $\lambda_i$ are all approximately $0$.

We now show how \Cref{lem:dichotomy,lem:stability} can be used to prove \Cref{thm:balancing}.
The proof runs by splitting into two cases.
Say that vectors $u,w$ are \emph{$\alpha$-oblique} if $\abs{\inner{u}{w}} \in (\alpha, 1 - \alpha)$.
The two cases we consider in our proof are that either some pair of vectors are $\zeta^{1/4}$-oblique, or none are.

In the first case, wherein there is some $\zeta^{1/4}$-oblique pair $u,w$, we will discard the parity condition, and deduce from the obliqueness alone that the vectors are $(d-\eps)$-approximating.
We apply \Cref{lem:beck-elimination} to reduce to the case of a sequence $X$ of $d+2$ vectors, preserving the oblique pair, and---assuming for contradiction that the vectors are not $(d-\eps)$-approximating---deduce that $(u,w)$ is the only oblique pair in $X$.
We then show that none of the remaining vectors in $X$ are close to parallel to $u$ or $w$, and from this deduce that these remaining vectors have short projections onto the plane $P$ spanned by $u$ and $w$.
Finally, by considering approximations on $P$ and the orthogonal complement to $P$ separately, we can show that $X$ is $(d-\eps)$-approximating, as required.

In the second case, wherein there is no oblique pair, we use the parity condition on $n$.
We cluster the vectors into at most $d$ pairwise-almost-orthogonal clusters, and then pair up vectors within clusters, giving them opposite signs so that they almost cancel out.
The parity condition then implies that at most $d-1$ clusters have odd size, and we may then conclude directly.

\begin{proof}[Proof of \Cref{thm:balancing}]
    This proof has two main cases, which are then subject to further analysis: either some pair of vectors is $\zeta^{1/4}$-oblique, or no pairs of vectors is.
    We take $\zeta = 18 \eps^{1/4} d^4$, as in the statement of \Cref{lem:dichotomy}.
    Let $V = (v_1,\dotsc,v_n)$ be the given sequence of unit vectors.

    \vspace{1em}

    \textbf{Case 1. Some pair of vectors is $\zeta^{1/4}$-oblique.}
    Let $u,w\in V$ be the $\zeta^{1/4}$-oblique pair of vectors, i.e.\ with $\abs{\inner{u}{w}} \in (\zeta^{1/4}, 1- \zeta^{1/4})$.

    In this case, we claim that we can forget the parity condition, and prove that $V$ is ($d - \eps$)-approximating from the above assumption alone.

    If $\card{V} = d+1$, then the result follows immediately from \Cref{lem:dichotomy}, and so we may assume that $\card{V} \geq d + 2$.
    Apply \Cref{lem:beck-elimination} to $V$ with $W = V\setminus\set{u,w}$ and $k=d$.
    Thus we must prove that $X\defined Y\union\set{u,w}$ is ($d - \eps$)-approximating, where $Y\sseq W$ is arbitrary with cardinality $d$.

    By \Cref{lem:beck-elimination}, if we can find $X'\sseq X$ such that $\abs{X'} = d+1$ and, for every $x\in X'$, the set $X\setminus\set{x}$ is $(d-\eps)$-approximating, then we will be done.
    In particular, by \Cref{lem:dichotomy}, it suffices that $X\setminus\set{x}$ always contains some $\zeta$-oblique pair $y,z$ (i.e.\ with $\abs{\inner{y}{z}} \in (\zeta,1-\zeta)$).
    As $u,w$ are $\zeta^{1/4}$-oblique, we may thus assume that the sets $Y\union\set{u}$ and $Y\union\set{w}$ are both $\zeta$-almost orthogonal.

    The rest of the proof follows three main steps.
    First, we show that no vector in $Y$ is almost parallel to either $u$ or $w$.
    Then, we deduce that all vectors in $Y$ have short projection onto the plane $P$ spanned by $u$ and $w$.
    Finally, we use this information to deduce that $Z$ is $(d-\eps)$-approximating.

    \vspace{1em}

    \textbf{Step 1(a). No vector in $Y$ is almost parallel to $u$ or $w$.}
    Assume for contradiction that some $y\in Y$ has $\inner{w}{y} > 1 - \zeta$ (replacing $y$ by $-y$ if necessary); the case for $\inner{u}{y} < -1 + \zeta$ is entirely similar.
    By \Cref{fact:cosine}, we find that $\norm{w - y} < \sqrt{2\zeta}$ and $\norm{u - w} \in (\sqrt{2\zeta^{1/4}}, \sqrt{2(1-\zeta^{1/4})})$.
    Thus
    \[\norm{u - y} \in \p[\Big]{\sqrt{2\zeta^{1/4}} - \sqrt{2\zeta}, \sqrt{2(1-\zeta^{1/4})} + \sqrt{2\zeta}},\]
    and we claim that this interval is contained in $\p[\big]{\sqrt{2\zeta}, \sqrt{2(1-\zeta)}}$.
    Indeed, this follows from some simple calculations and the fact that $\zeta < 2^{-4}$ (which follows from $\eps < 2^{-36} d^{-16}$).
    Thus, applying \Cref{fact:cosine} again, we find that, for all $y\in Y$, $\abs{\inner{y}{w}} \leq \zeta$, and similarly $\abs{\inner{y}{u}} \leq \zeta$.

    \vspace{1em}

    \textbf{Step 1(b). All projections of $Y$ onto $P$ are small.}
    Recall that $P$ is the plane through 0 containing the (non-parallel) vectors $u$ and $w$.
    Let $y\in Y$ be arbitrary, and let $z$ be the projection of $y$ onto $P$.
    Thus $z = \alpha u + \beta w$ for some reals $\alpha, \beta$, which we assume are positive (the cases wherein one or both are negative are entirely similar).
    By the triangle inequality, we find
    \[\norm{z} \leq \norm{\alpha u} + \norm{\beta w} = \alpha + \beta.\]
    We know that $\inner{y}{u} = \inner{z}{u} = \alpha + \beta\inner{u}{w} \leq \zeta$, and similarly $\inner{y}{w} = \alpha\inner{u}{w} + \beta \leq \zeta$.
    Thus, as $u,w$ are $\zeta^{1/4}$-oblique, we find that 
    \begin{equation}
    \label{eq:projection_bound}
        \norm{z} \leq \alpha + \beta = \frac{\inner{y}{u} + \inner{y}{w}}{1 + \inner{u}{w}} \leq \frac{2\zeta}{\zeta^{1/4}} = 2\zeta^{3/4},
    \end{equation}
    and all projections onto $P$ are indeed small, as required.

    \vspace{1em}

    \textbf{Step 1(c). $X$ is $(d-\eps)$-approximating.}
    Write $Y = (y_1,\dotsc,y_d)$.
    We must approximate a vector $p = \lambda_{d+1} u + \lambda_{d+2} w + \sum_{i=1}^d \lambda_i y_i$ by a signed sum of the vectors in $X$.
    Let $P^\perp$ be the $(d-2)$-dimensional space orthogonal to $P$.
    We use the set $Y$ to approximate the projection of $p$ to $P^\perp$, and $u,w$ to approximate the projection of $p$ to $P$, and then account for the error terms.

    Indeed, let $p = p_1 + p_2$, where $p_1\in P$ and $p_2 \in P^\perp$.
    We know from \Cref{lem:basic} that $Y$ projected down to $P^\perp$ is $(d-2)$-approximating, and so we give signs to $Y$ using this approximation.
    To be precise, let $y_i '$ be the projection of $y_i$ into $P^\perp$.
    We know from \eqref{eq:projection_bound} that $\norm{y_i - y_i'} \leq 2\zeta^{3/4}$.
    \Cref{lem:basic} tells us that there are signs $\eta_1,\dotsc,\eta_d$ such that
    \begin{equation}
    \label{eq:p2_bound}
        \norm{\eta_1 y_1' + \dotsb + \eta_d y_d' - p_2}^2 \leq d-2.
    \end{equation}
    
    In $P$, \Cref{lem:stability} tells us that $(u,w)$ is $(2-\zeta^{1/2})$-approximating, as $\abs{\inner{u}{w}} > \zeta^{1/4}$.
    However, 
    \[p_1 = \lambda_{d+1} u + \lambda_{d+2} w + \sum_{i=1}^d \lambda_i (y_i - y_i')\]
    does not necessarily lie in the convex hull of $\set{\eta u + \eta'w \st \eta,\eta'\in \set{-1,+1}}$, and so we can only deduce that there are signs $\eta_{d+1}$ and $\eta_{d+2}$ such that
    \begin{equation}
    \label{eq:p1_bound}
        \norm[\Big]{\eta_{d+1} u + \eta_{d+2} w - \p[\Big]{p_1 - \sum_{i=1}^d \lambda_i (y_i - y_i')}}^2 \leq 2 - \zeta^{1/2}.
    \end{equation}
    Combining the above approximations, we may find by splitting the norm into components in $P$ and in $P^\perp$ that
    \begin{equation*}
        \norm[\Big]{\eta_{d+1} u + \eta_{d+2} w + \sum_{i=1}^d \eta_i y_i - p}^2
        = \norm[\Big]{\eta_{d+1} u + \eta_{d+2} w + \sum_{i=1}^d \eta_i (y_i - y_i') - p_1}^2 + \norm[\Big]{\sum_{i=1}^d \eta_i y_i' - p_2}^2.
    \end{equation*}
    The second term on the right-hand side is bounded above by $d-2$ due to \eqref{eq:p2_bound}, and the first term is at most
    \begin{equation*}
        \p[\bigg]{\norm[\Big]{\eta_{d+1}u + \eta_{d+2} w - \p[\Big]{p_2 - \sum_{i=1}^d \lambda_i (y_i - y_i')}} + 2\sum_{i=1}^d \norm{y_i - y_i'}}^2 \leq \p[\Big]{\sqrt{2 - \zeta^{1/2}} + 4d\zeta^{3/4}}^2,
    \end{equation*}
    where we have used \eqref{eq:projection_bound} and \eqref{eq:p1_bound}.
    All in all, the total square error of our approximation is at most
    \[d - 2 + \p[\Big]{4d\zeta^{3/4} + \sqrt{2 - \zeta^{1/2}}}^2 = d - \p[\Big]{\zeta^{1/2}- 16d^2\zeta^{3/2} - 8d\zeta^{3/4}\sqrt{2 - \zeta^{1/2}}}.\]
    Note that $\zeta^{1/2}/4 \geq 16d^2\zeta^{3/2}$ and $\zeta^{1/2}/2 \geq 8\sqrt{2} d\zeta^{3/4}$ imply that $X$ is $(d - \zeta^{1/2}/4)$-approximating.
    Indeed, the former inequality is implied by $\eps < 2^{-44} d^{-24}$, and the latter by $\eps < 2^{-100} d^{-32}$, both of which hold.
    Finally, the fact that $\zeta^{1/2} \geq 4\eps$ allows us to deduce that $X$ is $(d-\eps)$-approximating, as required.

    \vspace{1em}

    \textbf{Case 2. Every pair of vectors is either nearly orthogonal or nearly parallel.}
    In this case, for all $x,y\in V$, either $\abs{\inner{x}{y}} \leq \zeta^{1/4}$ or $\abs{\inner{x}{y}} \geq 1 - \zeta^{1/4}$.
    We claim that $V$ can be partitioned into at most $d$ clusters which are pairwise almost-orthogonal.
    Indeed, for $x,y\in V$, we write $x\sim y$ if $\abs{\inner{x}{y}} \geq 1 - \zeta^{1/4}$.
    This will be an equivalence relation provided that it is transitive, i.e.\ if $\abs{\inner{x}{y}} \geq 1 - \zeta^{1/4}$ and $\abs{\inner{y}{z}} \geq 1 - \zeta^{1/4}$ imply that $\abs{\inner{x}{z}} > \zeta^{1/4}$.
    Applying \Cref{fact:cosine}, we find that this is implied by $1 - 4\zeta^{1/4} \leq \zeta^{1/4}$, which holds true whenever $\zeta \leq 0.0016$, which is in turn implied by our bounds on $\eps$.
    The \emph{clusters} of $V$ are thus the equivalence classes of the relation $\sim$.

    We show that $V$ has at most $d$ clusters.
    Indeed, assume for contradiction that there were at least $d+1$ clusters, and let $y_1,\dotsc,y_{d+1}$ be representatives of these clusters, so that for all $1\leq i < j \leq d+1$, we have $\abs{\inner{y_i}{y_j}} \leq \zeta^{1/4}$.
    In this case, we may apply \Cref{lem:close-to-orthonormal} to produce an orthonormal basis $f_1,\dotsc,f_d$ of $\RR^d$ such that $\norm{y_i - f_i} \leq 3 \zeta^{1/8} d$ for all $i\leq d$.
    We may then bound, for any $1\leq i \leq d$,
    \begin{equation*}
        \abs{\inner{y_{d+1}}{f_i}} \leq \abs{\inner{y_{d+1}}{y_i}} + \abs{\inner{y_{d+1}}{f_i - y_i}} \leq \zeta^{1/4} + 3\zeta^{1/8} d < 4 \zeta^{1/8} d.
    \end{equation*}
    As $\sum_{i=1}^d \abs{\inner{y_{d+1}}{f_i}} \geq \sum_{i=1}^d \inner{y_{d+1}}{f_i}^2 = 1$, we will find a contradiction if $4 \zeta^{1/8} d < 1/d$.
    Unwrapping definitions, this is implied by $\eps < 2^{-84} d^{-80}$, which holds.
    Thus there are indeed at most $d$ clusters.

    Let the clusters be $Y_1,\dotsc,Y_d$ (some of which may be empty).
    Given this clustering, we now construct a small signed sum of $V$.
    Let $Y_i = (y_1^{(i)}, y_2^{(i)}, \cdots, y_{m_i}^{(i)})$ for each $i$, and assume, multiplying some vectors by $-1$ if necessary, that every pair $y,y'\in Y_i$ has $\inner{y}{y'} \geq 1 - \zeta^{1/4}$.
    We produce the signed sum on $V$ in stages.
    First of all, let 
    \begin{equation*}
        X_S \defined (y_{2j}^{(i)} - y_{2j-1}^{(i)} \st 1\leq i \leq d, \; 1\leq j \leq \floor{m_i/2})
    \end{equation*}
    be a sequence of ``short'' vectors coming from matching up pairs in each $Y_i$, and let
    \begin{equation*}
        X_L \defined (y_{m_i}^{(i)} \st m_i \equiv 1 \mod{2})
    \end{equation*}
    be the sequence of ``long'' vectors not used in $X_S$.

    Note that, as $n\not\equiv d\mod{2}$, the sequence $X_L$ has at most $d-1$ elements.
    Applying \Cref{lem:basic}, we may thus find a signed sum $x_L$ of $X_L$ with $\norm{x_L}\leq \sqrt{d-1}$.
    Noting that, by \Cref{fact:cosine} the norm of any element of $X_S$ is at most $\sqrt{2}\zeta^{1/2}$, we can apply \Cref{lem:basic} again to find a signed sum $x_S$ of $X_S$ with $\norm{x_S} \leq \sqrt{2} \zeta^{1/2} d^{1/2}$.
    There is thus a signed sum $x = x_L \pm x_S$ of $V$ with norm at most $\sqrt{d - 1 + 2\zeta d}$.
    This is less than $\sqrt{d-\eps}$ provided that $\zeta < (1 - \eps) / (2d)$, which is implied by $\zeta < 1/(4d)$, or equivalently $\eps < 2^{-28} d^{-20}$, which holds.
    Thus $V$ is $(d-\eps)$-approximating, completing the proof of \Cref{thm:balancing}.
\end{proof}

%----------------------------------------------------------%

\section{Stability of approximations}
\label{sec:stability}

The goal of this section is to prove \Cref{lem:dichotomy,lem:stability}.
The proof of \Cref{lem:dichotomy} uses \Cref{lem:stability}, so we begin with a proof of the latter.

\begin{proof}[Proof of \Cref{lem:stability}]
    The crux of this proof is the following geometric claim.

    \begin{claim}
        \label{claim:geometry}
        Let $\Gamma$ be a circle of radius $r$ and centre $O$.
        Let $P$ be a point at distance $\sqrt{r^2 - a^2}$ from $O$ (for some $0 < a < r$), and let $\l$ be the line through $P$ at angle $\frac{\pi}{2} + \theta$ to the line $OP$ for some $-\pi/2 \leq \theta \leq \pi/2$.
        Then the chord of $\Gamma$ subtended by $\l$ has length $2\sqrt{r^2 \sin^2 \theta + a^2 \cos^2 \theta}$.
    \end{claim}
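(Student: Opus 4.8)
The plan is to reduce the claim to the elementary fact that a chord of $\Gamma$ whose supporting line lies at distance $h$ from the centre $O$ has length $2\sqrt{r^2-h^2}$, and then simply to compute that distance (or, more directly, to intersect $\ell$ with $\Gamma$ in coordinates). Concretely, I would place coordinates so that $O$ is the origin and $P = (\sqrt{r^2-a^2},\,0)$ lies on the positive $x$-axis, so the line $OP$ is the $x$-axis. A line $\ell$ through $P$ making angle $\tfrac{\pi}{2}+\theta$ with $OP$ then has unit direction vector $(-\sin\theta,\ \cos\theta)$, so its points are $\bigl(\sqrt{r^2-a^2}-t\sin\theta,\ t\cos\theta\bigr)$ for $t\in\RR$.

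Next I would substitute this parametrisation into $x^2+y^2=r^2$. Since $\sin^2\theta+\cos^2\theta=1$, the $t^2$ terms combine and the linear-in-$t$ and constant terms simplify to the quadratic $t^2 - 2\sqrt{r^2-a^2}\,\sin\theta\;t - a^2 = 0$. Its two roots $t_1,t_2$ are the parameter values of the chord's endpoints, and because the direction vector is a unit vector, the chord length is exactly $\abs{t_1-t_2}$. The product of the roots is $-a^2<0$, so the roots are real and of opposite sign; in particular $\ell$ genuinely meets $\Gamma$ in two points (consistent with $\abs{OP}=\sqrt{r^2-a^2}<r$, i.e.\ $P$ lies inside $\Gamma$), so no case analysis on signs of $\theta$, or on whether $\ell$ is a secant, is needed.

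Finally, using $(t_1-t_2)^2=(t_1+t_2)^2-4t_1t_2$ with sum $2\sqrt{r^2-a^2}\,\sin\theta$ and product $-a^2$, I obtain $\abs{t_1-t_2}=2\sqrt{(r^2-a^2)\sin^2\theta+a^2}=2\sqrt{r^2\sin^2\theta+a^2\cos^2\theta}$, the claimed length. (Equivalently and more synthetically: the foot of the perpendicular from $O$ to $\ell$ is at distance $h=\abs{OP}\cdot\abs{\sin\angle(OP,\ell)}=\sqrt{r^2-a^2}\,\cos\theta$ from $O$, since $\sin(\tfrac{\pi}{2}+\theta)=\cos\theta\geq 0$ for $\theta\in[-\tfrac{\pi}{2},\tfrac{\pi}{2}]$; plugging into $2\sqrt{r^2-h^2}$ and using $\sin^2\theta=1-\cos^2\theta$ gives the same expression, and $h\leq\sqrt{r^2-a^2}<r$ confirms the chord exists.) I do not expect any genuine obstacle: this is a short computation, and the only point needing a word of care — that the intersection is nonempty so a chord is actually subtended — is handled automatically by the negative product of the roots.
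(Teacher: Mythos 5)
Your argument is correct, and its parenthetical ``more synthetically'' version is essentially the paper's own proof: drop a perpendicular from $O$ to $\ell$, compute its length $h=\sqrt{r^2-a^2}\cos\theta$ from the right triangle $OPA$, and then apply Pythagoras in $OAB$ (equivalently the chord-length formula $2\sqrt{r^2-h^2}$). Your primary route, however, is genuinely different in flavour: you set coordinates, parametrise $\ell$ by arclength, substitute into the circle equation, and read off $\abs{t_1-t_2}$ via Vieta's formulas from the quadratic $t^2-2\sqrt{r^2-a^2}\sin\theta\,t-a^2=0$. This buys you a little more rigour for free: the product of the roots is $-a^2<0$, so the two intersection points automatically exist and straddle $P$, whereas the synthetic version tacitly uses $P$ being interior to $\Gamma$ and the fact that $A$ bisects the chord. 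The paper's version is shorter and matches the geometric intuition used elsewhere in the proof of Lemma~\ref{lem:stability}, but your algebraic computation is equally valid and arguably more self-contained.
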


    \begin{proof}
        Let $A$ be the point on $\l$ such that $OA$ is perpendicular to $\l$, and let $B$ be one of the two points where $\l$ meets $\Gamma$.
        Considering the right-angled triangle $OPA$, we find that the segment $OA$ has length $\sqrt{r^2 - a^2} \cos \theta$, and thus by Pythagoras' theorem in triangle $OAB$, we find that segment $AB$ has length $\sqrt{r^2 - (r^2 - a^2) \cos^2 \theta}$, from which the desired result soon follows.
    \end{proof}

    One key point to note concerning \Cref{claim:geometry} is that $\sqrt{r^2 \sin^2 \theta + a^2 \cos^2 \theta} \geq a$, with equality if and only if $\theta = 0$.
    We first use \Cref{claim:geometry} to show that, if the sequence $(v_1,\dotsc,v_m)$ is $(r^2 - 1)$-approximating, then $(v_1,\dotsc,v_m,v_{m+1})$ is $r^2$-approximating, and we then deduce the two more precise conclusions.
    
    Let $s_m \defined \sum_{i=1}^m (\lambda_i + \eta_i) v_i$ and assume that $\norm{s_m}^2 = r^2 - 1$ for some $r$.
    Let $x,y$ be the two values of $s_m + (\lambda_{m+1} \pm 1) v_{m+1}$.
    We claim that at least one of $x$ and $y$ has norm at most $r$.
    Indeed, $\norm{x - y} = 2$, and so we may apply \Cref{claim:geometry} with the point $P$ being $s_m$, $a = 1$, and the line $\l$ containing both $x$ and $y$.
    As the chord of $\l$ subtended by $\Gamma$ has length at least $2$, at least one of the points $x$ and $y$ must be inside the convex hull of $\Gamma$.
    Thus we see that if $\norm{s_m}^2 \leq r^2 - 1$, then there is a choice of $\eta_{m+1}$ so that $\norm{s_{m+1}} \leq r$.
    The fact that the vectors $(v_1,\dotsc,v_d)$ are $d$-approximating follows immediately from this observation.

    We will now refine the above reasoning to produce the two stability-type results.

    First, if (without loss of generality) $\abs{\inner{v_1}{v_2}} \geq \delta$, then we consider these two vectors first.
    Assume that $\lambda_1 \geq 0$, so we pick $\eta_1 = -1$, and that the angle between $v_1$ and $v_2$ is $\frac{\pi}{2} + \theta$ (after choosing the sign of $v_2$ appropriately).
    Applying \Cref{claim:geometry} with $P = (-1 + \lambda_1) v_1$, we have that, if $r^2 - a^2 = (1 - \lambda_1)^2$ and $r^2 \sin^2\theta + a^2 \cos^2 \theta = 1$ for some $r$ and $a$, then let $x$ and $y$ be the two values of $(-1+\lambda_1)v_1 + (\pm 1 + \lambda_2)v_2$.
    Noting that $\norm{x - y} = 2$, and letting $\Gamma$ and $\l$ be as in the statement of \Cref{claim:geometry}, we see that $x$ and $y$ both lie on $\l$, and the chord of $\Gamma$ subtended by $\l$ has length $2$. Thus at least one of $x$ and $y$ must be inside $\Gamma$, and thus within distance $r$ of the origin.

    It thus suffices to prove that there is some valid choice of $a$ and $r$ and that $r$, the radius of $\Gamma$, satisfies $r^2 \leq 2 - \delta^2$, as then we may choose signs $\eta_3,\dotsc,\eta_d$ such that $\norm{\sum_{i=1}^d (\lambda_i + \eta_i)v_i}^2 \leq d - \delta^2$.
    Indeed, we may compute that $a^2 = 1 - (1-\lambda_1)^2 \sin^2\theta$ and $r^2 = 1 + (1-\lambda_1^2)\cos^2 \theta$, and it is straightforward to deduce from the above that 
    \begin{equation*}
        r^2 = 1 - (1-\lambda_1)^2 \cos^2 \theta \leq 2 - \sin^2 \theta.
    \end{equation*}
    Noting that, due to the definition of $\theta$, we have $\inner{v_1}{v_2} = \sin \theta$, the desired result immediately follows.

    Finally, if $\abs{\lambda_1} \geq \delta$, then we can take $\eta_1$ to have the opposite sign to $\lambda_1$, so that
    \begin{equation*}
        \norm{(1 - \lambda_1) v_1}^2 \leq (1 - \delta)^2 \leq 1 - \delta,
    \end{equation*}
    from which we may deduce the final part of the lemma, and \Cref{lem:stability} is proved.
\end{proof}

The proof of \Cref{lem:dichotomy} is somewhat technical, but the ideas involved are not too complex.
We recall that we are working with a sequence $V$ of vectors $v_1,\dotsc,v_{d+1}\in \RR^d$ under the assumption that for some distinct $i$ and $j$, $v_i,v_j$ are $\zeta$-oblique, i.e.\ $\zeta < \abs{\inner{v_i}{v_j}} < 1 - \zeta$, and we wish to prove that the sequence of vectors is $(d - \eps)$-approximating.
We now briefly outline the proof, and then present the details.

\begin{enumerate}[label=\arabic*.]
    \item If every $d$ of the $d+1$ vectors contains a $\zeta$-oblique pair, then we are done, so assume that some $d$ of the vectors are approximately orthogonal.
    \item At the cost of proving a stronger approximation result, we may replace the $d$ approximately orthogonal vectors with an orthonormal basis $E = (e_1,\dotsc,e_d)$ of $\RR^d$.
    \item If $y$ is the vector which was not part of the approximately orthogonal set, then $y$ is not close to any $e_i$.
    \item By \Cref{lem:stability}, it suffices to prove that the convex hulls of the parallelotopes within $S(V)$ are sufficiently well approximated.
    Any $d$-subsequence including $y$ is suitably approximating, and so it remains to approximate the two hypercubes $S(E) + y$ and $S(E) - y$.
    \item Every point of $\Conv(S(E)) + y$ is suitably well approximated by either some point of $S(E) + y$ or a specific point of $S(E) - y$.
\end{enumerate}

\begin{proof}[Proof of \Cref{lem:dichotomy}]
    We follow the proof outline presented above.

    \textbf{Step 1: finding an approximately orthogonal $d$-subsequence.}
    By a $d$-subsequence being ``approximately orthogonal'', we mean that all pairwise inner products are small.
    We thus assume for contradiction that, amongst every $d$-subsequence of the $d+1$ vectors, there is some pair $u,v$ with $\abs{\inner{u}{v}} > \eps ^{1/2}$.
    Then, by \Cref{lem:stability}, we find that this $d$-subsequence is $(d-\eps)$-approximating, and so we are done by \Cref{lem:beck-elimination}.

    Thus we may write our $(d+1)$-subsequence of vectors as $(x_1,\dotsc,x_d,y)$, where for all $1\leq i < j \leq d$, we have $\abs{\inner{x_i}{x_j}} \leq \eps^{1/2}$.

    \vspace{1em}

    \textbf{Step 2: moving to an orthonormal basis.}
    We now replace $x_1,\dotsc,x_d$ with an orthonormal basis, at the cost of proving a slightly stronger approximation result.
    Indeed, let $e_1,\dotsc,e_d$ be the orthonormal basis guaranteed by \Cref{lem:close-to-orthonormal}, where for all $1\leq i \leq d$ we have
    \begin{equation}
    \label{eq:orthonormal-approximation}
        \norm{x_i - e_i} \leq 3\eps^{1/4} d.
    \end{equation}
    We claim that it suffices to prove that the sequence $W = (e_1,\dotsc,e_d,y)$ is $(d - 6 \eps^{1/4} d^3)$-approximating.

    Indeed, if the above does hold, then for any point $p$ in $\Conv(S(W))$, we have that
    \begin{align*}
        \norm[\Big]{\sum_{i=1}^d \eta_i x_i + \eta_{d+1} y - p}
        &\leq \norm[\Big]{\sum_{i=1}^d \eta_i e_i + \eta_{d+1} y - p} + \norm[\Big]{\sum_{i=1}^d \eta_i (x_i - e_i)} \\
        &\leq \sqrt{d - 6 \eps^{1/4} d^3} + 3 \eps^{1/4} d^2.
    \end{align*}
    It thus suffices that the above is at most $\sqrt{d-\eps}$.
    Rearranging and squaring, the above inequality is equivalent to
    \begin{equation*}
        \eps + 6 \eps^{1/4} d^2 \sqrt{d - \eps} \leq 9 \eps^{1/2} d^4 + 6 \eps^{1/4} d^3,
    \end{equation*}
    and, comparing terms, the fact that this inequality holds is clear.
    We therefore now work to prove that $W$ is $(d - 6 \eps^{1/4} d^3)$-approximating.

    \vspace{1em}

    \textbf{Step 3: the vectors $y$ and $e_i$ are not close.}
    As $E=(e_1,\dotsc,e_d)$ is an orthonormal basis, we may write $y = \sum_{i=1}^d y_i e_i$ for some reals $y_i$ with $\sum_{i=1}^d y_i^2 = 1$.
    By replacing $e_i$ with $-e_i$ if necessary, we may moreover assume that, for all $i$, $y_i \geq 0$.
    We know from the statement of \Cref{lem:dichotomy} that we may assume that, for all $i$, $\zeta \leq \abs{\inner{y}{x_i}} \leq 1-\zeta$.
    We have that
    \begin{equation*}
        y_i
        = \inner{y}{e_i} 
        = \inner{y}{x_i} + \inner{y}{e_i - x_i},
    \end{equation*}
    and recalling from \eqref{eq:orthonormal-approximation} that $\norm{e_i - x_i} \leq 3\eps^{1/4} d$, we find that
    \begin{equation}
    \label{eq:y1_bound}
        \zeta / 2 < \zeta - 3\eps^{1/4} d \leq y_i \leq 1 - \zeta + 3\eps^{1/4} d < 1 - \zeta/2.
    \end{equation}
    We thus see that $y$ is not close to any $e_i$.

    \vspace{1em}

    \textbf{Step 4: approximation by $d$-subsequences including $y$.}
    By \Cref{lemma:covering} it suffices to consider approximations in the convex hulls of the signed sums of $d$-subsequences of $W$.
    If $Y\sseq W$ is a $d$-subsequence with $y\in Y$, then we will show that $Y$ is suitably approximating, even if we ignore the vector not in $Y$.
    We will use \Cref{lem:stability}, and to this end compute a lower bound for $\max\set{y_2,y_3,\dotsc,y_d}$ (recalling that these numbers are assumed to be non-negative).
    By symmetry this lower bound will also hold for any other $d-1$ of the coefficients $y_1,\dotsc,y_d$.
    Indeed, recalling inequality \eqref{eq:y1_bound}, we find that
    \begin{equation*}
        \sum_{i=2}^d y_i^2 = 1 - y_1^2 \geq \zeta - \zeta^2 / 4 > \zeta / 2.
    \end{equation*}
    It is therefore clear that $\max\set{y_2,y_3,\dotsc,y_d} > \sqrt{\zeta / 2d}$.
    Thus, by symmetry, for any $d$-subsequence $Y\sseq W$ as described above, there is some $i$ such that $e_i \in Y$ and $\inner{y}{e_i} > \sqrt{\zeta/2d}$, and so by \Cref{lem:stability}, $Y$ is $(d - \zeta/(2d))$-approximating.

    Recalling that $\zeta = 18\eps^{1/4} d^4$, we find that $\zeta / 2d > 6\eps^{1/4} d^3$, and so $W$ can $(d-6\eps^{1/4} d^3)$-approximate any point in the convex hull of a parallelotope corresponding to a $d$-subsequence $Y$ as above.

    \vspace{1em}

    \textbf{Step 5: approximating the centre of a hypercube.}
    The remaining part of the proof of \Cref{lem:dichotomy} is to show that the points corresponding to $\Conv(S(E))$ are suitably well-approximated.
    Indeed, the points close to the centre of this hypercube are not well-approximated by the vertices, and so we will have to make use of the other points of $S(W)$.
    Moving the origin, we must show that every point in $\Conv(S(E))$ is $(d-6\eps^{1/4} d^3)$-approximated by a point of $S(E) \union (S(E) + 2y)$.
    In fact, the only point of $S(E) + 2y$ we will use is $\sum_{i=1}^d (2y_i - 1) e_i$.

    Indeed, by \Cref{lem:stability}, the point $p = \sum_{i=1}^d \lambda_i e_i$ is $(d-6\eps^{1/4} d^3)$-approximated by a point of $S(E)$ unless, for all $i$, $\abs{\lambda_i} < 6\eps^{1/4} d^3$.

    Thus assume that we do have $\abs{\lambda_i} < 6\eps^{1/4} d^3$ for every $i$.
    In this case, $\norm{p}^2 < 36\eps^{1/2} d^7$, and it suffices to prove that any such point $p$ is within distance $\sqrt{d - 6\eps^{1/4} d^3}$ of $\sum_{i=1}^d (2y_i - 1)e_i$.
    In particular, it suffices to prove that
    \begin{equation*}
        \sum_{i=1}^d (2y_i - 1)^2 \leq \p[\Big]{\sqrt{d - 6\eps^{1/4} d^3} - \sqrt{36 \eps^{1/2} d^7}}^2,
    \end{equation*}
    which, after some rearranging, is equivalent to
    \begin{equation*}
        4 + 6\eps^{1/4} d^3 - 36 \eps^{1/2} d^7 + 12\eps^{1/4} d^{7/2} \sqrt{d - 6\eps^{1/4} d^3} \leq 4 \sum_{i=1}^d y_i.
    \end{equation*}
    and so is implied by
    \begin{equation}
    \label{eq:y-target}
        \sum_{i=1}^d y_i \geq 1 + \frac{9}{2} \eps^{1/4} d^4.
    \end{equation}
    To prove this, we may note that
    \begin{equation*}
        \sum_{i=2}^d y_i \geq \sum_{i=2}^d y_i ^2 = 1 - y_1^2 = 1 - y_1 + (y_1 - y_1^2),
    \end{equation*}
    whence
    \begin{equation*}
        \sum_{i=1}^dy_i \geq 1 + y_1 - y_1^2 \geq 1 + \frac{\zeta}{2}\p[\Big]{1 - \frac{\zeta}{2}} > 1 + \frac{\zeta}{4}
    \end{equation*}
    as $y_1 \in (\zeta/2, 1 - \zeta/2)$.
    Finally, recalling that $\zeta = 18 \eps^{1/4} d^4$, we may deduce \eqref{eq:y-target}, completing the proof of \Cref{lem:dichotomy}.
\end{proof}

%----------------------------------------------------------%

\section{Concluding remarks and open problems}
\label{sec:conclusion}

We have demonstrated that, for any dimension $d$, the infimum of the probability $\prob{\norm{\sum_{i=1}^n \xi_i v_i} \leq r}$ over choices of unit vectors $v_1,\dotsc,v_n$ in $\RR^d$ is sensitive to the parity of $n$, particularly for $r$ just below $\sqrt{d}$. 
However, our results are only a first step in this direction, and there is much more to be understood.
Perhaps most prominent is \Cref{q:balancing}, which we repeat here.

\balanceQuestionMain*

One indication that this may be difficult to prove, if true, is the complexity of what would be the tight examples.
Indeed, in the case of $d=3$, $n=4$, there is a large family of examples of $v_1,v_2,v_3,v_4$ with $\min\norm{\sum_{i=1}^4 \eta_i v_i} = \sqrt{2}$: let $v_1$ be parallel to $v_2$, and $v_3$ be orthogonal to $v_4$.
So long as $v_1+v_2$ cannot be well-approximated by $\pm v_3 \pm v_4$, this is a tight example.
This leads to examples for larger values of $n$ by adding pairs of vectors $v_{2i-1} = v_{2i} = v_j$ for some $j\in [4]$.
Thus, it would appear that the application of a dichotomy results like \Cref{lem:dichotomy} would be significantly more difficult.

Nevertheless, we are hopeful that more progress could be made; the case of $n=d+1$ would seem to be a particularly appealing starting point.

%----------------------------------------------------------%

\section{Acknowledgements}

The first author is funded by the internal graduate studentship of Trinity College, Cambridge.

%----------------------------------------------------------%

\bibliographystyle{abbrvnat}  
\renewcommand{\bibname}{Bibliography}
\bibliography{main}

%----------------------------------------------------------%

\end{document}